\documentclass[a4paper,10pt,leqno, english]{amsart}
\title[Analyitical  Signatures ]
{Analytical Signatures  and  Proper  actions }

\author{No\'{e} B\'{a}rcenas }
                \email{barcenas@matmor.unam.mx}
         \urladdr{http://www.matmor.unam.mx /~ barcenas}

 \address{Centro de Ciencias Matem\'aticas. UNAM \\Ap.Postal 61-3 Xangari. Morelia, Michoac\'an MEXICO 58089}

\author{Quitzeh Morales Mel\'{e}ndez}
 \email{qmoralesme@conacyt.mx}

 \address{CONACYT--Universidad Pedag\'{o}gica Nacional\\Camino a la Zanjita s/n, Noche Buena, Santa Cruz Xoxocotl\'an, Oaxaca, MEXICO 71230}
         
         \date{\today}


\usepackage{hyperref}
\usepackage{pdfsync}
\usepackage{calc}
\usepackage{color}
\usepackage{enumerate,amssymb}
\usepackage[arrow,curve,matrix,tips,2cell]{xy}
  \SelectTips{eu}{10} \UseTips
  \UseAllTwocells

\DeclareMathAlphabet\EuR{U}{eur}{m}{n}
\SetMathAlphabet\EuR{bold}{U}{eur}{b}{n}

\makeindex             



\theoremstyle{plain}
\newtheorem{theorem}{Theorem}[section]
\newtheorem{lemma}[theorem]{Lemma}
\newtheorem{proposition}[theorem]{Proposition}
\newtheorem{corollary}[theorem]{Corollary}

\theoremstyle{definition}
\newtheorem{definition}[theorem]{Definition}

\newtheorem{remark}[theorem]{Remark}

{\catcode`@=11\global\let\c@equation=\c@theorem}



\newcommand{\comsquare}[8]                   
{\begin{CD}
#1 @>#2>> #3\\
@V{#4}VV @V{#5}VV\\
#6 @>#7>> #8
\end{CD}
}

\newcommand{\xycomsquare}[8]                   
{\xymatrix
{#1 \ar[r]^{#2} \ar[d]^{#4} &
#3 \ar[d]^{#5}  \\
#6\ar[r]^{#7} &
#8
}
}











\newcommand{\id}{\operatorname{id}}

\newcommand{\im}{\operatorname{im}}



\newcommand{\eub}[1]{\underline{E}#1}              

\newcommand{\higherlim}[3]{{\setbox1=\hbox{\rm lim}
        \setbox2=\hbox to \wd1{\leftarrowfill} \ht2=0pt \dp2=-1pt
        \mathop{\vtop{\baselineskip=5pt\box1\box2}}
        _{#1}}^{#2}#3}

\newcommand{\version}[1]                       
{\begin{center} last edited on #1\\
last compiled on \today\\
name of texfile: \jobname
\end{center}
}

\newcounter{commentcounter}


\begin{document}

\maketitle

\begin{abstract}
In  this  paper we compare Mishchenko's  definition  of  noncommutative  signature   for  an  oriented manifold  with an  orientation preserving proper action of  a  discrete, countable  group $G$ with   the (more   analytical)  counter part  defined  by  Higson  and  Roe  in the  series  of  articles  ``Mapping  Surgery  to  analysis". A  generalization of  the  bordism  invariance  of  the  coarse index  is  also  addressed. 
 
\end{abstract}

\section{Introduction}
There are different notions of non-commutative signatures that can be applied to oriented proper cocompact $G$-manifolds for a discrete group $G$. Higson and Roe studied the relation between a signature of $C^*$-algebras, an analytic signature and the coarse index of the signature operator, they also show that these signatures are bordism and homotopy invariants.

For these definitions, they consider two types of so-called Hilbert-Poincar\'e complexes:  \textit{algebraic complexes} of finitely generated projective modules over a $C^{*}$-algebra $C$ and \textit{analytically controlled complexes} of Hilbert spaces. Both kind of complexes are required to satisfy suitable versions of Poincar\'e Duality. The \textit{algebraic} signature has values in the $K$-theory $K_{*}(C)$ of the algebra $C$, and the \textit{analytic} signature has values in the Mitchener $K$-theory of a suitable $C^{*}$-category.

All these signatures are defined for the case of a compact oriented smooth manifold $X$ and the authors showed that the analytic signature coincides with the $K$-theoretic index of the signature operator defined on the $L^{2}$-completion of the De Rham complex of $X$. In this case, it is proven that Mitchener $K$-theory coincides with the $K$-theory $K_{*}(C_{r}(G))$ of the reduced $C^{*}$-algebra of the group $G$.

Their $C^*$-algebra signature is defined for finitely generated projective
Hilbert-Poincar\'e modules over the algebra $C_{0}(X)$ of continuous functions vanishing at
infinity. In the case of an oriented smooth manifold $\tilde X$ with orientation preserving free action of a discrete group $G$ their definition makes no sense if the quotient $X=\tilde X / G$ is not compact, because the complexes considered are not finitely generated over this algebra and the representation of $C_{0}(X)$ on the given complex is not by chain maps.
The analytic signature does make sense and the proof of its coincidence with the index of the signature operator generalizes to this context.

On the other hand, Mishchenko defined a signature for  finitely generated projective algebraic Hilbert-Poincar\'e complexes over the reduced $C^*$-algebra $C_{r}^*(G)$ of the group $G$. This can be applied to a proper oriented co-compact smooth $G$-manifold $M$ with orientation preserving action of the group $G$. The analytic signature of Higson and Roe also makes sense in this context for the $L^{2}$-completion of the De Rham complex.

In this paper we show that, with slight modifications to the notion of algebraic Hilbert-Poincar\'e complex, the $C^*$-algebra signature defined by Higson and Roe coincides in even dimension with that of Mishchenko.  A consequence of this is another proof of the homotopy and bordism invariance of the signature of Mishchenko. The analytic version of the signature can be applied in this context to triangulated \textit{bounded isotropy} proper oriented $G$-manifolds of even dimension. In this case, the coincidence of the analytic signature with the coarse index of the signature operator is a consequence of the results proven by Higson and Roe. Also, another version of bordism invariance  is considered in this context. In the last section, we 
synthetize the relations between the signatures considered.



\section{Aknowledgements}

The  first author  thanks  the  support  of  PAPIIT-UNAM grants  IA 100315 "Topolog\'ia  Algebraica  y  la  Aplicaci\'on de  Ensamble  de  Baum-Connes", IA 100117 "Topolog\'ia Equivariante  y Teor\'ia  de  \'Indice", as well  as  CONACYT-SEP  Foundational  Research  grant "Geometr\'ia no  conmutativa, Aplicaci\'on de  Baum-Connes  y  Topolog\'ia  Algebraica ".

The authors thank the anonymous referee for his constructive criticism that considerably contributed to improve our inicial manuscript into the present text. The authors also thank V. Manuilov for helpful comments on the proof of Theorem \ref{maintheorem} and  Tibor  Macko for enlightening  correspondence  concerning  algebraic  versions  of  bordism.

\section{Algebraic Hilbert-Poincar\'e complexes and their signature}

In \cite{MishchenkoAlmost} a signature for a Hilbert-Poincar\'e complex was  defined.
This definition is as follows.

Let $C$ be a $C^*$-algebra. Recall that an $n$-dimensional Hilbert-Poincar\'e  complex is a triple $(E, b, S)$ where $(E,b)$ is an $n$-dimensional chain complex
\begin{equation}
\xymatrix{
E_{0}&\ar[l]^{b_{1}} E_{1}&\ar[l]^{b_{2}}\cdots &\ar[l] E_{n-1}&\ar[l]^{b_{n}} E_{n} \\
 }
\end{equation}
of finitely generated projective Hilbert modules over a $C^*$-algebra $C$, the $b_{k}, k=1, \dots, n$ are bounded adjointable maps, $b=\oplus_{k}b_{k}:E\to E$, $E = \oplus_{k}E_{k}$,  
and $S:E \to E$ is a self-adjoint operator such that 
\begin{enumerate}
\item $S_k: E_{n-k} \to E_{k}$, where $S_k=S\vert_{E_{n-k}} $,
\item $b_{k}S_{k}+S_{k-1}b^{*}_{n-k+1}=0$ and 
\item $S$ induces an isomorphism from the homology of the dual complex  
$(E,b^*)$ to the homology of the complex  $(E,b)$.
\end{enumerate}
The second condition means that $S:(E,-b^*)\to (E,b)$ is a chain map. 

We recall the following definition.
\begin{definition} (conf. \cite[def.2.2,p.280]{HigsonRoe1})\label{Mappingcone} 
The mapping cone of a chain map $A:(E',b') \to (E,b)$ is the complex
\begin{equation}
\xymatrix{
E''_{0}&\ar[l]^{b_{1}} E''_{1}&\ar[l]\cdots &\ar[l]^{b_{n}} E''_{n}&\ar[l]^{b_{n+1}} E''_{n+1} \\
 }
\end{equation}where $E''_{j}=E'_{j-1} \oplus E_{j}$ and differential $b'':E''\to E''$ defined by
\begin{equation}
b''_{j}=\left(\begin{array}{cc}
-b'_{j-1}  & 0      \\
A_{j-1} & b_{j}
\end{array}\right)
\end{equation}
\end{definition}


Using the language and notations in \cite{HigsonRoe1}, the definitions of the signature are as follows.

\begin{definition}\label{AsmishSignature}(Mishchenko, \cite[sec.3]{MishchenkoAlmost}). 
Let $(E, b, S)$ be a Hilbert-Poincar\'e complex of Hilbert $C$-modules (with $S$ self-adjoint and $bS+Sb^*=0$) and let $(E\oplus E, b_S)$ the mapping cone of $S$.
Then, the signature of $(E, b, S)$ is the formal difference $[Q_+]-[Q_-]$ in $K_0(C)$ 
of the positive and negative projection of the restriction of the map 
$B_S=b_{S}^{*}+b_{S}$ to the $+1$ eigenspace of the symmetry which exchanges the two copies of $E$ in $E\oplus E$.
\end{definition}

\begin{remark}
In the previous definition we made use of the fact that the self-adjoint operator $B_S=b_{S}^{*}+b_{S}$ is invertible. Indeed,  property (iii) in the definition of the Hilbert-Poincar\'e  complex is equivalent to the acyclicity of the complex $E\oplus E$ by lemma 2.3 of \cite{HigsonRoe1}. This is equivalent to the invertibility of $B_{S}$ according to proposition 2.1 of \cite{HigsonRoe1}.
\end{remark}

\begin{remark}\label{indexoperator} In the construction of Mishchenko \cite{MishchenkoAlmost} the summands in the mapping cone are interchanged and this gives a different formula for the operator: $B_S=b_S+Tb_ST$, where $T$ is the symmetry interchanging the two summands copies of $E$ in $E\oplus E$ (see \cite[p.14]{MishchenkoAlmost} and notice the typo in the identity $H_k = T^*_kH_{n-k+2}T_{k-1}$). In the notation used here, Mishchenko's definition of the mapping cone would be the complex
\begin{equation}
\xymatrix{
E''_{0}&\ar[l]^{b_{1}} E''_{1}&\ar[l]\cdots &\ar[l]^{b_{n}} E''_{n}&\ar[l]^{b_{n+1}} E''_{n+1} \\
 }
\end{equation}where $E''_{j}=E_{j} \oplus E'_{j-1}$ and the differential $b'':E''\to E''$ is defined by \begin{equation}
b''_{j}=\left(\begin{array}{cc}
b_{j}  & A_{j-1}     \\
0       &-b'_{j-1}
\end{array}\right).
\end{equation}
The signature turns out to be just the index in $K_0(C)$ of the operator \linebreak
$G_{ev}=b+b^*+S : E\to E $.
\end{remark}

\begin{definition}\label{HigsonRoeSignature}(Higson-Roe).
Let $(E, b, S)$ be an even dimensional Hilbert-Poincar\'e complex of Hilbert $C$-modules.
The signature of $(E, b, S)$ is the formal difference $[P_+]-[P_-]$ of the positive projections of 
$B+S$ and $B-S$ respectively, where $B= b + b^*$.
\end{definition}

\begin{proposition}\label{coincidence}
Definitions \ref{AsmishSignature} and \ref{HigsonRoeSignature} coincide.
\end{proposition}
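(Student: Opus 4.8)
The plan is to reduce both definitions to spectral data of the single self-adjoint operator $B+S$ acting on $E$, and then to exploit the grading of the complex to relate $B-S$ to $B+S$ by an explicit symmetry.

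First I would write the mapping-cone operator in block form with respect to the splitting $E\oplus E$. A direct computation (using either the convention of Definition \ref{Mappingcone} or the equivalent one of Remark \ref{indexoperator}) gives
\[
B_S=b_S+b_S^*=\begin{pmatrix} B & S \\ S & B \end{pmatrix},\qquad T=\begin{pmatrix} 0 & 1 \\ 1 & 0 \end{pmatrix},
\]
so that $B_S$ commutes with the symmetry $T$ and respects its eigenspace splitting. On the $(+1)$-eigenspace $\{(x,x)\}$, identified unitarily with $E$ via $x\mapsto\tfrac{1}{\sqrt 2}(x,x)$, the operator $B_S$ acts as $B+S$; on the $(-1)$-eigenspace it acts as $B-S$. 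Hence the class of Definition \ref{AsmishSignature} is exactly $[P_+(B+S)]-[P_-(B+S)]$, where $P_\pm$ denote the positive and negative spectral projections.

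Next I would record invertibility: by the remark following Definition \ref{AsmishSignature}, Poincar\'e duality makes $B_S$ invertible, and since $B_S=(B+S)\oplus(B-S)$ along the $T$-eigenspaces, both $B+S$ and $B-S$ are invertible. Thus all the spectral projections are genuine projections and $P_+(B+S)+P_-(B+S)=[E]$. The crucial step is then to produce a symmetry intertwining $B+S$ and $B-S$. Let $\Gamma=\bigoplus_k(-1)^k\id_{E_k}$ be the grading operator. Since $b$ and $b^*$ shift degree by one, $\Gamma B\Gamma=-B$; and because $S_k:E_{n-k}\to E_k$ with $n$ even, one computes $\Gamma S\Gamma=(-1)^n S=S$. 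Therefore $\Gamma(B+S)\Gamma=-(B-S)$, and applying the functional calculus together with the fact that $\Gamma$ is a self-adjoint unitary yields $P_+(B-S)=P_+\!\big(-\Gamma(B+S)\Gamma\big)=\Gamma\,P_-(B+S)\,\Gamma$, whence $[P_+(B-S)]=[P_-(B+S)]$ in $K_0(C)$.

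Combining these, the class of Definition \ref{HigsonRoeSignature} is $[P_+(B+S)]-[P_+(B-S)]=[P_+(B+S)]-[P_-(B+S)]$, which is precisely the class of Definition \ref{AsmishSignature}. I expect the main obstacle to be the grading bookkeeping: getting the signs right in the block form of $B_S$ and, above all, verifying that $S$ is \emph{even} for the symmetry $\Gamma$. This is exactly where the even-dimensionality hypothesis enters, since the identity $\Gamma S\Gamma=(-1)^n S$ fails for odd $n$, consistent with Higson--Roe's definition being formulated only in even dimension.
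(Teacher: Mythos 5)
Your proof is correct and follows essentially the same route as the paper: first identify Mishchenko's class with $[P_+(B+S)]-[P_-(B+S)]$ (you do this by a direct block computation of $B_S$ and the $T$-eigenspace splitting, which is exactly the content of Remark \ref{indexoperator} that the paper cites), and then conjugate by the parity grading operator --- your $\Gamma$ is precisely the paper's symmetry $\varphi$ --- to intertwine $B-S$ with $-(B+S)$ and conclude $[P_+(B-S)]=[P_-(B+S)]$. Your observation that $\Gamma S\Gamma=(-1)^nS$ pinpoints where even-dimensionality is used, a point the paper leaves implicit.
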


\proof\, Using  remark \ref{indexoperator} one has that $P_+=Q_+$. So, it is enough to prove that the positive projection of the operator $B-S$ is equivalent to the negative projection
of $B+S=b+b^*+S$.

Consider the self-adjoint symmetry  $\varphi: E \to E$ equal to the identity on 
$E_{2i},\, i=0, \dots, 2l$ and minus the identity on $E_{2i+1},\, i=0, \dots, 2l-1$. This operator 
intertwines $B-S$ with $- B - S$ and, therefore, the positive projection of $B - S$ 
is equivalent to the negative projection of $b+b^*+S$.
$\hfill \square$

\begin{remark} 
Higson-Roe definition of the index while more elaborated is more suitable for the aim of comparison with the index of the signature operator on the de Rham complex. Mishchenko's definition is a more straightforward generalization of the signature of an algebraic Poincar\'e complex to the context of $C^{*}$-algebras.
\end{remark}


\section{Bordism invariance of the algebraic signature}

Let $(E,b)$ be an $(n+1)$-dimensional complex of Hilbert $C$-modules, $(E_0,b_0)\subset (E,b)$ an $n$-dimensional subcomplex, more precisely $E=E_1\oplus E_0$ for some $E_1$ and $b|_{E_0}=b_0$, and $S:E \to E$ is a self-adjoint operator such that 
\begin{enumerate}
\item $S_k: E_{n+1-k} \to E_{k}$, where $S_k=S\vert_{E_{n-k}} $,
\item $(b_{k}S_{k}+S_{k-1}b^{*}_{n-k+2})v$ for every $v\in E_0$ and 
\item $S$ induces an isomorphism from the homology of the dual complex  
$(E,b^*)$ to the homology of the quotient complex  $(E/E_0,b_1)$ where 
$b_1: E/E_0\to E/E_0$ is the induced boundary operator.
\end{enumerate} Such a complex is called an algebraic Hilbert-Poincar\'e complex with boundary.

Then one can show the analog to \cite[lemma 1.1,p.503]{Mishchenko}.
\begin{lemma}\label{boundaryduality}
The boundary complex $(E_0,\sqrt{-1}\,b_0,S_0)$, where 
 $S_0=bS+Sb^{*}|_{E_0}$, is an algebraic Hilbert-Poincar\'e complex (without boundary).
 \end{lemma}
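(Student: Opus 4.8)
The plan is to verify directly the three defining conditions for the triple $(E_{0},\sqrt{-1}\,b_{0},S_{0})$, reading off the boundary duality as a compression of the ``defect'' operator $D:=bS+Sb^{*}$. First I would record two structural facts about $D$. Since $S$ is self-adjoint, $D^{*}=S^{*}b^{*}+bS^{*}=Sb^{*}+bS=D$, so $D$ is self-adjoint; and since $b^{2}=(b^{*})^{2}=0$ one has the key identity
\begin{equation*}
bD=bSb^{*}=Db^{*}.
\end{equation*}
Writing everything in block form with respect to $E=E_{1}\oplus E_{0}$, the differential is block lower-triangular, $b=\bigl(\begin{smallmatrix} b_{1}&0\\ \gamma&b_{0}\end{smallmatrix}\bigr)$, because $E_{0}$ is a $b$-subcomplex and $b_{1}$ is the induced quotient differential. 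Reading condition (ii) of the with-boundary definition as the statement that $bS+Sb^{*}$ preserves $E_{0}$, and combining this with $D=D^{*}$, one finds that $D$ is in fact block-diagonal; hence $S_{0}=D|_{E_{0}}=\iota_{0}^{*}D\iota_{0}$ is a genuine self-adjoint operator on $E_{0}$, where $\iota_{0}\colon E_{0}\hookrightarrow E$ is the inclusion.

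Granting this, conditions (i) and (ii) are short computations. For the degree condition, since $S_{k}\colon E_{n+1-k}\to E_{k}$ while $b$ lowers and $b^{*}$ raises degree by one, the composite $D=bS+Sb^{*}$ sends $E_{n-k}$ into $E_{k}$; restricting to $E_{0}$ gives $(S_{0})_{k}\colon (E_{0})_{n-k}\to(E_{0})_{k}$, which is exactly condition (i) for an $n$-dimensional complex and accounts for the index shift from $n+1$ to $n$. For the chain-map condition I would compress $bD=Db^{*}$ to $E_{0}$, using $b_{0}=\iota_{0}^{*}b\,\iota_{0}$, $b_{0}^{*}=\iota_{0}^{*}b^{*}\iota_{0}$ and $S_{0}=\iota_{0}^{*}D\iota_{0}$. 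The block-diagonality of $D$ makes its off-diagonal block $\iota_{0}^{*}D(1-\iota_{0}\iota_{0}^{*})$ vanish, and this same vanishing kills the error term coming from the discrepancy $b^{*}|_{E_{0}}\neq b_{0}^{*}$; the upshot is $b_{0}S_{0}=S_{0}b_{0}^{*}$. Setting $\beta_{0}:=\sqrt{-1}\,b_{0}$ one has $\beta_{0}^{*}=-\sqrt{-1}\,b_{0}^{*}$, so $\beta_{0}S_{0}+S_{0}\beta_{0}^{*}=\sqrt{-1}\,(b_{0}S_{0}-S_{0}b_{0}^{*})=0$. This is condition (ii), and it is precisely why the factor $\sqrt{-1}$ is inserted in the boundary differential: it converts the natural commutation $b_{0}S_{0}=S_{0}b_{0}^{*}$ into the required anticommutation.

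The real content is condition (iii), that $S_{0}$ induces an isomorphism $H_{*}(E_{0},b_{0}^{*})\to H_{*}(E_{0},b_{0})$. Here I would exploit the fact that $E_{0}$ is a $b$-subcomplex with quotient $(E/E_{0},b_{1})$, whereas for the dual differential the roles reverse: $E_{1}$ is a $b^{*}$-subcomplex and $E_{0}\cong E/E_{1}$ carries the induced differential $b_{0}^{*}$. These two short exact sequences of complexes yield long exact sequences in homology, and hypothesis (iii) for the pair provides the isomorphism $H_{*}(E,b^{*})\xrightarrow{\;\sim\;}H_{*}(E/E_{0},b_{1})$ induced by $S$. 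The plan is to interpret $(S_{0})_{*}$ through the connecting homomorphisms of these two sequences, so that $S$, its quotient-restriction realizing the given isomorphism, and $(S_{0})_{*}$ assemble into a commutative ladder between the $b^{*}$-sequence of $(E,E_{1})$ and the $b$-sequence of $(E,E_{0})$; the five lemma then forces $(S_{0})_{*}$ to be an isomorphism. Equivalently, and perhaps more transparently, one may invoke \cite[Lemma 2.3]{HigsonRoe1} to replace (iii) by acyclicity of the mapping cone of $S_{0}$, and deduce this acyclicity from the acyclicity encoded in the pair-hypothesis through the long exact sequence relating the two mapping cones.

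The step I expect to be the main obstacle is exactly the verification that this ladder commutes: tracking $S_{0}=D|_{E_{0}}$ through the two connecting homomorphisms, producing the vertical maps from the appropriate components of $S$, and checking that the sign discrepancy between $b^{*}$ and $b_{0}^{*}$ — the same one tamed in condition (ii) by the factor $\sqrt{-1}$ — is compatible across the squares. Once commutativity and exactness are established, the five lemma is automatic, so the entire weight of the lemma rests on this compatibility of the duality $S$ with the boundary maps of the pair.
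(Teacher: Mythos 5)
Your verification of conditions (i) and (ii) is correct, and it is essentially the paper's own argument in compressed form: the paper fixes the orthogonal decomposition $E=E_0\oplus E_1$, writes $b$ and $S$ as $2\times 2$ matrices, shows that all blocks of $D=bS+Sb^*$ except the $E_0$-block vanish, extracts the explicit formula (\ref{inducedduality}) for $S_0$, and then verifies $b_0S_0-S_0b_0^*=0$ by exactly the cancellations that your identity $bD=bSb^*=Db^*$ packages. Two small remarks. First, your reading of condition (ii) (``$D$ preserves $E_0$'') is weaker than what the paper's proof actually uses, namely $\im(bS+Sb^*)\subset E_0$ (the displayed condition (ii) in the definition is garbled); self-adjointness then kills the whole $E_1$-row \emph{and} $E_1$-column of $D$, not just the off-diagonal blocks. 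For (i)--(ii) the weaker reading suffices, but it does not suffice for (iii): without $D\iota_1=0$ neither $jS$ nor $S\iota_1$ is even a chain map, so the hypothesis of the lemma forces the stronger statement. Second, the paper's explicit formula for $S_0$ is reused later (equation (\ref{sign0}) in Theorem \ref{algebraicbordisminvariance}); your compression loses it, though that is irrelevant for the lemma itself.

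The genuine gap is in your treatment of condition (iii) --- which, you should know, the paper does not prove at all: its proof stops after $b_0S_0-S_0b_0^*=0$, in effect deferring duality to \cite[Lemma 1.1]{Mishchenko}. So you are attempting more than the paper, but the attempt as written cannot close. In your ladder the three families of vertical maps are $(S\iota_1)_*\colon H_*(E_1,b_1^*)\to H_*(E,b)$, $(jS)_*\colon H_*(E,b^*)\to H_*(E/E_0,b_1)$ and $(S_0)_*$, shifted one slot because sub and quotient exchange roles between the $b^*$-sequence and the $b$-sequence. The hypothesis makes only the middle family $(jS)_*$ into isomorphisms, whereas the five-term window centered at $(S_0)_*$ has $(jS)_*$ on one side and $(S\iota_1)_*$ on the other: the five lemma cannot run without knowing that $(S\iota_1)_*$ is an isomorphism (and an epimorphism one slot further out). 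Your mapping-cone variant has the same deficit: acyclicity of the cone of $jS$ alone does not force acyclicity of the cone of $S_0$; you need the cone of $S\iota_1$ as well. The missing ingredient is a separate lemma: since $S=S^*$ and $j^*=\iota_1$, one has $S\iota_1=(jS)^*$, and one must show that the adjoint of a homology equivalence of complexes of finitely generated projective Hilbert modules is again a homology equivalence. This follows from \cite[Lemmas 2.1 and 2.3]{HigsonRoe1}: a chain map $A$ is a homology equivalence iff the self-adjoint operator $B$ of its mapping cone is invertible, and the $B$-operator of the cone of $A^*$ equals $-VB_{\mathrm{cone}(A)}V^*$ for an explicit unitary $V$, so one is invertible iff the other is. With this supplement, plus the connecting-homomorphism and sign checks you already flag, your argument for (iii) goes through and would in fact fill a hole that the paper leaves open.
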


\proof Consider the following diagram
\begin{equation}\label{exactsequenceboundary}
\xymatrix{
0 \ar[r] & E_0 \ar[r]^{i} & E\ar[r]^{j} & E/E_0\ar[r]& 0 \\
         &0\ar[r]& E/E_0\ar[u]^{Sj^*}\ar[r]_{j^*}& E\ar[u]_{jS}\ar[ul]^S\ar[r]_{i^{*}}&
E_0\ar[r]&0  \\
 }
 \end{equation}where the rows are exact.
We will construct a map $ b_0: E_0 \to E_0$
such that
$$
ib_0i^*=bS+Sd^{*}.
$$
On one hand the image $dS+Sd^{*}(E)$ is contained in the subcomplex
$ E_0$, i.e.
$$
bS+Sb^{*}(E)\subset E_0,
$$
so, it can
be composed with the inverse $i^{-1}:\im i \to E_0$,
and, then $S_0$ can be defined as
$$
S_0=i^{-1}[i^{-1}(bS+Sb^{*})]^{*}=i^{-1}( bS+Sb^{*})i^{-1*}.
$$(this is just chasing de diagram).

On the other hand, the subcomplex $E_0$ is a direct summand
of the complex $E$, that is, there is a subcomplex $E_1\subset E$
such that
$$
E=E_0\oplus E_1
$$
so the homomorphism $S$ is represented by a matrix, i.e.
$$S=\left(\begin{array}{cc}
S_2&F\\
F^*&S_1
\end{array}\right),\quad S^*_2=S_2,\;S^*_1=S_1,
$$where
$$S_1:E_1\to E_1,\quad S_2:E_0\to E_0,\quad
F:E_1\to E_0.$$

In this terms,
$$
i=\left(\begin{array}{c}
1\\
0
\end{array}\right),\quad
j=\left(\begin{array}{cc}
0&1
\end{array}\right),\quad
b=\left(\begin{array}{cc}
\tilde b_0&h\\
f& \tilde b_1
\end{array}\right),
$$
where $ib_0=bi,$ with $b_0$ being the differential of
the chain complex $ E_0$, i.e.
$$\left(\begin{array}{c}
1\\
0
\end{array}\right)b_0
=
\left(\begin{array}{cc}
\tilde b_0&h\\
f& \tilde b_1
\end{array}\right)
\left(\begin{array}{c}
1\\
0
\end{array}\right),
$$i.e.
$$\left(\begin{array}{c}
b_0\\
0
\end{array}\right)
=
\left(\begin{array}{c}
\tilde b_0\\
f
\end{array}\right),
$$i.e. $f\equiv0,\;\tilde b_0= b_0$, and $jb=b_1j$ with $b_1$ the
differential of the complex $E/ E_0$, i.e.
$$
\left(\begin{array}{cc}
0&1
\end{array}\right)
\left(\begin{array}{cc}
 b_0&h\\
0 &\tilde b_1
\end{array}\right)
=
b_1\left(\begin{array}{cc}
0&1
\end{array}\right)
$$which means $\tilde b_1=b_1$.

Also,
$$0=b^2=
\left(\begin{array}{cc}
 b_0&h\\
0& b_1
\end{array}\right)
\left(\begin{array}{cc}
 b_0&h\\
0& b_1
\end{array}\right)
=$$
$$=\left(\begin{array}{cc}
 b^2_0&b_0h+hb_1\\
0& b^2_1
\end{array}\right)
=
\left(\begin{array}{cc}
0&b_0h+hb_1\\
0& 0
\end{array}\right)
$$i.e.
\begin{equation}\label{chainhomotopy1}
b_0h+hb_1=0, \end{equation}
where $h:E_1 \to E_0 $.

Then,
\begin{align*}
&bS+Sb^*=\\
&=\left(\begin{array}{cc}
 b_0&h\\
0& b_1
\end{array}\right)
\left(\begin{array}{cc}
S_2&F\\
F^*&S_1
\end{array}\right)
+
\left(\begin{array}{cc}
S_2&F\\
F^*&S_1
\end{array}\right)
\left(\begin{array}{cc}
 b^*_0&0\\
h^*& b^*_1
\end{array}\right)=\\
&=\left(\begin{array}{cc}
 b_0S_2+hF^* &b_0F+hS_1\\
b_1F^*& b_1S_1
\end{array}\right)
+
\left(\begin{array}{cc}
S_2b^*_0+Fh^*&Fb^*_1\\
F^*b^*_0+S_1h^*&S_1b^*_1
\end{array}\right)=\\
&=\left(\begin{array}{rl}
 b_0S_2+S_2b^*_0+hF^*+Fh^* &b_0F+hS_1+Fb^*_1\\
b_1F^*+F^*b^*_0+S_1h^*      & b_1S_1+S_1b^*_1
\end{array}\right).
\end{align*}
From the equation $j(bS+Sb^*)=0$ we obtain
\begin{equation}\label{chainhomotopy2}
b_1F^*+F^*b^*_0=-S_1h^*,
\end{equation}
$$
b_1S_1+S_1b^*_1=0.
$$

In addition,
$$
b_0F+hS_1+Fb^*_1=(b_1F^*+F^*b^*_0+S_1h^*)^*=0,
$$
i.e.
\begin{equation}\label{chainhomotopy3}
b_0F+Fb^*_1=-hS_1,
\end{equation}
$$
bS+Sb^*=
$$
$$
=\left(\begin{array}{cc}
b_0S_2+S_2b^*_0+hF^*+Fh^* & 0\\
                       0   & 0
\end{array}\right)
$$i.e.
\begin{equation}\label{inducedduality}
S_0=
 b_0S_2+S_2b^*_0+hF^*+Fh^*.
\end{equation}

Lets now show that $S_0$ makes $ E_0$ an algebraic Hilbert-Poincar\'e complex with the differential  $\sqrt{-1}\, b_0$. 
In this case, we prove that
$b_0S_0-S_0b^*_0=0$.

Indeed,
$$b_0S_0-S_0b^*_0=$$
$$
=b_0(b_0S_2+S_2b^*_0+hF^*+Fh^*)-
(b_0S_2+S_2b^*_0+hF^*+Fh^*)b^*_0=
$$
$$
=b^2_0S_2+b_0S_2b^*_0+b_0hF^*+b_0Fh^*-
b_0S_2b^*_0-S_2b_0^{*2}-hF^*b^*_0-Fh^*b^*_0=
$$
$$
=b_0hF^*+b_0Fh^*-hF^*b^*_0-Fh^*b^*_0.
$$ 
From (\ref{chainhomotopy1}), $b_0h=-hb_1$ and
$h^*b^*_0=-b^*_1h^*$, substituting
$$b_0S_0-S_0b^*_0
=b_0hF^*+b_0Fh^*-hF^*b^*_0-Fh^*b^*_0=
$$
$$
=-hb_1F^*+b_0Fh^*-hF^*b^*_0+Fb^*_1h^*=
$$
$$
=-h(b_1F^*+F^*b^*_0)+(b_0F+Fb^*_1)h^*=
$$
$$
=hS_1h^*-hS_1h^*=0.
$$
$\hfill\square$

Now we are able to prove algebraic bordism invariance.

\begin{theorem}\label{algebraicbordisminvariance}
The signature of the boundary complex $(E_0,\sqrt{-1}\,b_0,S_0)$ is equal to zero.
\end{theorem}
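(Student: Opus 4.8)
The plan is to reduce the statement to an equality of $K$-theory classes and then to kill that difference by a homotopy through invertible operators supplied by the filling. By Proposition \ref{coincidence} I may use the Higson--Roe form of the signature (Definition \ref{HigsonRoeSignature}) for the boundary complex $(E_0,\sqrt{-1}\,b_0,S_0)$: it is $[P_+]-[P_-]\in K_0(C)$, where $P_+$ and $P_-$ are the positive spectral projections of the self-adjoint operators $B_0+S_0$ and $B_0-S_0$, with $B_0=\sqrt{-1}\,b_0+(\sqrt{-1}\,b_0)^*=\sqrt{-1}(b_0-b_0^*)$. Both operators are invertible because, by Lemma \ref{boundaryduality}, $(E_0,\sqrt{-1}\,b_0,S_0)$ is a genuine Hilbert--Poincar\'e complex, so property (iii) together with the cited results of \cite{HigsonRoe1} forces $B_0\pm S_0$ to be invertible. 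Hence the theorem is exactly the assertion that $[P_+]=[P_-]$.

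The method I would use is that for an invertible self-adjoint operator $A$ the positive projection $P_+(A)=\tfrac12\bigl(1+\sign(A)\bigr)$ depends norm-continuously on $A$, since $0\notin\operatorname{spec}(A)$; therefore any norm-continuous path of invertible self-adjoint operators joining $B_0+S_0$ to $B_0-S_0$ yields a homotopy of projections and gives $[P_+]=[P_-]$. Such a path cannot be constructed inside $\operatorname{End}(E_0)$ alone --- that would make every signature vanish --- so the filling must enter essentially. Concretely I would work with the decomposition $E=E_0\oplus E_1$ and the block forms of $b$ and $S$ obtained in the proof of Lemma \ref{boundaryduality}, together with the formula $S_0=b_0S_2+S_2b_0^*+hF^*+Fh^*$ of \eqref{inducedduality}. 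The idea is to rotate $S_0$ into $-S_0$ through the off-diagonal data $F,h$ and the interior duality $S_1$, i.e. to build a family $G_\theta$, $\theta\in[0,\pi]$, of self-adjoint operators (obtained by conjugating by a rotation that mixes $E_0$ with $E_1$) with $G_0=B_0+S_0$ and $G_\pi=B_0-S_0$, so that the sign of the $S_0$-term is reversed across the filling degrees of freedom rather than on $E_0$.

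The relations derived in Lemma \ref{boundaryduality} are precisely what makes this work: equation \eqref{chainhomotopy1} says $h$ is a chain homotopy ($b_0h+hb_1=0$), while \eqref{chainhomotopy2} and \eqref{chainhomotopy3} record how $F$ intertwines the interior duality with this homotopy, and these are exactly the cancellations that prevent the spectrum of $G_\theta$ from crossing $0$. Conceptually this is the algebraic shadow of the classical fact that a Poincar\'e complex which bounds carries a Lagrangian and is therefore metabolic, so its signature vanishes; the inclusion $E_0\hookrightarrow E$ produces the degenerating subspace along which the duality form is isotropic. The main obstacle will be verifying invertibility of $G_\theta$ for every $\theta$ --- equivalently, showing that the Poincar\'e-pair condition (iii) on the filling keeps the $E_1$-block nondegenerate along the whole path --- while simultaneously keeping track of the $\sqrt{-1}$ factor and the even/odd grading so that each $G_\theta$ remains self-adjoint. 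Once uniform invertibility is established the continuity of $\sign$ closes the argument immediately; one could equally phrase the same homotopy in the index picture of Remark \ref{indexoperator}, showing that $\sqrt{-1}(b_0-b_0^*)+S_0$ is joined through invertibles to an operator of index zero.
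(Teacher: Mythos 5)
Your opening reduction is fine: by Proposition \ref{coincidence} and Lemma \ref{boundaryduality} the theorem is indeed the statement $[P_+]=[P_-]$ for the invertible self-adjoint operators $B_0\pm S_0$, and your guiding intuition (a complex that bounds is ``metabolic'', so the filling must enter the argument) is the right one. But the proof stops exactly where the theorem lives: the family $G_\theta$ is never written down, and you yourself defer its essential property as ``the main obstacle''. Moreover the plan is internally unstable. If $G_\theta$ is literally obtained by conjugating by a path of unitaries, then invertibility is automatic (conjugation preserves the spectrum), so your stated obstacle evaporates --- but so does the need for a path: conjugation by \emph{any} unitary $U$ preserves the $K_0$-class of a positive projection, since $UP_+U^*$ and $P_+$ are Murray--von Neumann equivalent via the partial isometry $UP_+$. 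Hence a conjugation argument proves the theorem if and only if one can exhibit a unitary $U$ and an invertible self-adjoint $A$ on the complementary summands with $U\bigl((B_0+S_0)\oplus A\bigr)U^*=(B_0-S_0)\oplus A$ exactly (the stabilization by $A$ is forced, since $B_0\pm S_0$ act on $E_0$ while your rotation mixes $E_0$ with $E_1$); constructing such $U$ and $A$ from $h$, $F$, $S_1$ is precisely the missing content. If instead $G_\theta$ is a general path of self-adjoint operators, then uniform invertibility is the missing content. In either reading, the assertion that \eqref{chainhomotopy1}--\eqref{chainhomotopy3} ``are exactly the cancellations that prevent the spectrum of $G_\theta$ from crossing $0$'' is unsupported: no formula for $G_\theta$ is given against which to check it.

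For comparison, the paper's proof avoids spectral theory altogether, and it uses those same identities for a different purpose. It forms the hyperbolic Hilbert--Poincar\'e complex on $E_1\oplus E_1$, with differential $\delta$ built from $b_1$ and $S_1$ and duality $T$ the symmetry interchanging the two copies of $E_1$; it checks, using \eqref{chainhomotopy1}--\eqref{chainhomotopy3}, that $f=(h\ \ F):E_1\oplus E_1\to E_0$ is a chain map; and it rewrites \eqref{inducedduality} as \eqref{sign0}, namely $S_0=fTf^*+b_0S_2+S_2b_0^*$. Thus $S_0$ differs from the pushed-forward hyperbolic duality $fTf^*$ by a chain homotopy, so $(E_0,\sqrt{-1}\,b_0,S_0)$ and $(E_0,\sqrt{-1}\,b_0,fTf^*)$ are homotopy equivalent Hilbert--Poincar\'e complexes in the sense of \cite[definition 4.1]{HigsonRoe1}, their signatures agree by \cite[theorem 4.3]{HigsonRoe1}, and the signature of the latter vanishes because its duality factors through the hyperbolic form $T$. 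The operator homotopy you are trying to build by hand is exactly what \cite[theorem 4.3]{HigsonRoe1} packages; if you invoke it at the point where you need your path, your sketch becomes the paper's proof, and without it (or an equivalent argument) there is at present no proof.
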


\proof By adding the rows in the diagram (\ref{exactsequenceboundary}), we obtain the sequence
\begin{equation}\label{exact4term}
0\to  E_0\overset{I}{\to} (E\oplus E/ E_0 )\overset{J}{\to}(E/ E_0\oplus E)\overset{I^*}{\to}  E_0^*\to 0
 \end{equation}where
$$I=\left(\begin{array}{cc}
i\\
0
\end{array}\right),\qquad
J=\left(\begin{array}{cc}
j&0\\
0&j^*
\end{array}\right)
$$and the graduated module $A=E\oplus E/ E_0$ is a
chain complex with the differential
$$H=\left(\begin{array}{cc}
b&Sj^*\\
0&b^*
\end{array}\right)
$$, i.e.
$$
H^2=\left(\begin{array}{cc}
b&Sj^*\\
0&b^*
\end{array}\right)
\left(\begin{array}{cc}
b&Sj^*\\
0&b^*
\end{array}\right)
=\left(\begin{array}{cc}
b^2& bSj^* + Sj^* b^*\\
0&b^{*2}
\end{array}\right),
$$but
$$
bSj^* + Sb^*j^*= (bS + Sb^*)j^*=(j(bS + Sb^*))^*=0.
$$By construction, the sequence (\ref{exact4term})
is exact, i.e.
$$
\im I = \im i =\ker j = \ker J,
$$
$$
\im J=\im j \oplus \im j^*= E/ E_0 \oplus \ker i^*= \ker I^*.
$$

In terms of the decomposition $E= E_0 \oplus E_1$, and
rearranging the terms, the sequence (\ref{exact4term})
takes the form
$$
0\to  E_0\overset{I}{\to} E_0\oplus E_1\oplus E_1
\overset{J}{\to} E_0\oplus E_1\oplus E_1\overset{I^*}{\to}  E_0^*\to0
$$where
$$I=\left(\begin{array}{cc}
1\\
0\\
0
\end{array}\right),\quad
J=\left(\begin{array}{ccc}
0&0&0\\
0&0&1\\
0&1&0
\end{array}\right),\quad
H=\left(\begin{array}{ccc}
d_1&h&F\\
0&d_2&D_2\\
0&0&d^*_2
\end{array}\right)
$$

Note that the complex $E_1\oplus E_1$ is
a chain complex with the differential
$$\delta=\sqrt{-1}\left(\begin{array}{cc}
b_1&S_1\\
0&b^*_1
\end{array}\right)
$$and
 an algebraic Hilbert-Poincar\'e complex with the isomorphism 
 $T:E_1\oplus E_1\to E_1\oplus E_1$
defined by the matrix
$$
T=\left(\begin{array}{cc}
0&1\\
1&0
\end{array}\right)
$$i.e. $\delta T - T\delta^*=$
$$\left(\begin{array}{cc}
b_1&S_1\\
0&b^*_1
\end{array}\right)
\left(\begin{array}{cc}
0&1\\
1&0
\end{array}\right)
-
\left(\begin{array}{cc}
0&1\\
1&0
\end{array}\right)
\left(\begin{array}{cc}
b_1^*&0\\
S_1&b_1
\end{array}\right)
=
$$
$$=\left(\begin{array}{cc}
S_1&b_1\\
b^*_1&0
\end{array}\right)
-
\left(\begin{array}{cc}
S_1&b_1\\
b^*_1&0
\end{array}\right)
=0
$$

Now consider the diagram
\begin{equation}
\xymatrix{
E_1\oplus E_1 \ar[r]^{f} &  E_0 \\
E_1\oplus E_1 \ar[u]^{T} & E_0^*\ar[u]_{S_0}\ar[l]^{f^*}\\
 }
 \end{equation}
where the homomorphism $f:E_1\oplus E_1\to  E_0$
defined by the matrix
$f=(\begin{array}{cc}
h &F
\end{array})
$ is a chain map, i.e.
$$f\delta=(\begin{array}{cc}
h &F
\end{array})
\left(\begin{array}{cc}
b_1&S_1\\
0&b^*_1
\end{array}\right)=(\begin{array}{cc}
hb_1 & hS_1+Fb^*_1
\end{array})=$$
$$=
(\begin{array}{cc}
-b_0h & -b_0F
\end{array})=
-b_0f
$$and $S_2$ defines a homotopy  between these
maps, i.e.
$$fTf^*=
(\begin{array}{cc}
h &F
\end{array})
\left(\begin{array}{cc}
0&1\\
1&0
\end{array}\right)
\left(\begin{array}{cc}
h^* \\
F^*
\end{array}\right)
=Fh^*+hF^*,$$ but by (\ref{inducedduality}),
\begin{equation}\label{sign0}
S_0=fTf^*+b_0S_2+S_2b^*_0.
\end{equation}

In particular, this equation means that 
$$H(f)H(T)H(f^*)=H(fTf^*)=H(S_0)$$
 is an isomorphism. That is, the complex $(E_0,\sqrt{-1}\,b_0, fTf^*)$ is an algebraic Hilbert-Poincar\'e complex and the complexes $(E_0,\sqrt{-1}\,b_0, fTf^*)$ and $(E_0, \sqrt{-1}\, b_0, S_0)$ are homotopy equivalent according to definition 4.1 of \cite[p.285]{HigsonRoe1} with the homotopy given by the identity map $E_0\to E_0$. They have the same signature by theorem 4.3 in the same paper. 
 
Finally, it is obvious that the signature of the complex $(E_0,\sqrt{-1}\,b_0, fTf^*)$ is equal to zero. $\hfill\square$


\section{Analytically controlled Hilbert-Poincar\'e complexes over $C^{*}$-categories and their signature}
Here we recall the definition of an analytically controlled Hilbert-Poincar\'e complex, its signature and other relevant constructions from \cite{HigsonRoe1}.

Consider a triple $(H, b, S)$, where $(H,b)$ is an 
$n$-dimensional chain complex 
\begin{equation}
\xymatrix{
H_{0}&\ar[l]^{b_{1}} H_{1}&\ar[l]^{b_{2}}\cdots &\ar[l] H_{n-1}&\ar[l]^{b_{n}} H_{n} \\
 }
\end{equation}
of Hilbert spaces, the operator $b= \oplus_k b_k : H \to H$, where $H=\oplus_k H$, is an unbounded, closed operator such that 
$b\circ b$ is defined an equal to zero, i.e. 
$Image(b)\subset Domain(b),\; b^{2}=0$.
The map $S:H \to H$ is an everywhere defined self-adjoint operator such that 
\begin{enumerate}
\item $S_k: H_{n-k} \to H_{k}$, where $S_k=S\vert_{H_{n-k}}$;
\item $S:(H,-b^*)\to (H,b)$ is a chain map, i.e. $S(Domain(b^{*}))\subset Domain(b)$ and
$(bS+Sb^{*})v=0$ for every $v\in Domain(b^{*})$;
\item $S$ induces an isomorphism from the homology of the dual complex  
$(H,b^*)$ to the homology of the complex  $(H,b)$.
\end{enumerate} Such a triple is called an \textit{analytic Hilbert-Poincar\'e complex}.

\begin{definition}
A $C^{*}$-category $\mathfrak{A}$ is an additive subcategory of all Hilbert spaces and bounded linear maps which is closed under taking adjoint of morphisms and such that the morphisms sets $\mathop{Hom}_{\mathfrak{A}}(H_{1}, H_{2})$ are Banach subspaces of the set $\mathop{Hom}(H_{1}, H_{2})$ of bounded linear operators from the Hilbert space
$H_{1}$ to the Hilbert space $H_{2}$.
\end{definition}

\begin{definition}
A $C^{*}$-category ideal $\mathfrak{J}$ of the $C^{*}$-category $\mathfrak{A}$ 
is a $C^{*}$-subcategory possibly without identity morphisms
such that any composition of a morphism in $\mathfrak{A}$ with a morphism in
$\mathfrak{J}$ is a moprhism in $\mathfrak{J}$.
\end{definition}

\begin{remark}
In the case of a $C^{*}$-category with a single object, this definition of ideal 
coincides with that of a (bilateral) ideal of a $C^{*}$-algebra of bounded operators 
on a fixed Hilbert space. In all of the following constructions, we  will  fix  this Hilbert space.
\end{remark}

\begin{definition}\label{controlledoperator}
An unbounded, self-adjoint Hilbert space operator 
$D:H \to H$ is said to be analytically controlled over 
the pair ($\mathfrak{A}$, $\mathfrak{J}$) if
\begin{enumerate}
\item $H$ is an object of  $\mathfrak{J}$,
\item the operators $(D\pm iI)^{-1}$ are morphisms of $\mathfrak{J}$, and
\item the operator $D(1+D^{2})^{\frac{1}{2}}$ is a morphism of $\mathfrak{A}$.
\end{enumerate} 
\end{definition}

This definition means that $f(D)$ is a morphism of $\mathfrak{J}$ for every
$f\in C_{0}(\mathbb{R})$ and $f(D)$ is a morphism of $\mathfrak{A}$ for every
$f\in C_{0}[-\infty,\infty]$.

\begin{definition}\label{controlledcomplex}
A complex $(H, b)$ of Hilbert spaces 
is said to be analytically controlled over ($\mathfrak{A}$, $\mathfrak{J}$) if 
the self-adjoint operator $B= b+b^{*}$ is analytically controlled over ($\mathfrak{A}$, $\mathfrak{J}$) according to  definition \ref{controlledoperator}.
\end{definition}

\begin{definition}\label{controlledHPcomplex}
An analytic Hilbert-Poincar\'e complex $(H, b, S)$ is said to be analytically 
controlled over ($\mathfrak{A}$, $\mathfrak{J}$) if the complex $(H, b)$ is 
analytically controlled over ($\mathfrak{A}$, $\mathfrak{J}$) in the sense of 
the previous definition, i.e. if $B= b+b^{*}$ is analytically controlled, and
the duality operator $T$ is a morphism in $\mathfrak{A}$.
\end{definition}

It is shown in \cite[lemma 5.8 and the discussion on p.291]{HigsonRoe1} that 
for a Hilbert-Poincar\'e complex 
analytically controlled over ($\mathfrak{A}$, $\mathfrak{J}$) the difference 
$P_{+}-P_{-}$ of the positive projections of the operators $B+S= b+b^{*}+S$
and $B-S= b+b^{*}-S$ (respectively) belongs to the ideal $J$ of $A$, where
$A$ is the $C^{*}$-algebra of  $\mathfrak{A}$-endomorphisms of the space $H$ and
$J$ is the  $C^{*}$-algebra of  $\mathfrak{J}$-endomorphisms of the same space.
This means that the formal difference $[P_{+}]-[P_{-}]$ is an element of the 
group $K_{n}(J)$. There is a natural map $K_{n}(J)\to K_{n}(\mathfrak{J})$ so 
there is a class in $K_{n}(\mathfrak{J})$ determined by the difference 
$[P_{+}]-[P_{-}]$.

\begin{definition}\label{ Definition_algsignature}
Let  $(H, b, S)$  be a Hilbert-Poincar\'e complex analytically 
controlled over ($\mathfrak{A}$, $\mathfrak{J}$). Its analytical signature is the class 
determined by the formal difference $[P_{+}]-[P_{-}]$ in $K_{n}(\mathfrak{J})$.
\end{definition}

\section{Signatures of a  $G$-manifold}

In this section we modify some the notions in \cite{HigsonRoe2} to extend the main 
results there to include proper, not necessarily free actions. Namely, 
we extend the definitions of the control categories. Also, we take into account 
the additional structure on the complex $C^{l^2}_*(M)$ of an oriented co-compact 
$G$-manifold $M$ (with orientation-preserving $G$-action) needed to make it an algebraic Hilbert-Poincar\'e complex over the reduced $C^{*}$-algebra $C^{*}_r(G)$. 
This complex also is interpreted by Higson and Roe as an
analytically controlled Hilbert-Poincar\'e complex and, therefore, it has two signatures.
The relation between this signatures is addressed. 

\subsection{The algebraic signature of a triangulated smooth $G$-manifold}

In \cite{Illman}, \cite{Korppi} it is shown that a smooth manifold $M$ with proper action of a discrete group $G$ admits $G$-invariant triangulations. It is also shown the uniqueness of this piece-wise linear structure up to barycentric subdivision. In this case one shall choose a triangulation such that every simplex is either fixed point-wise or permuted by the action. 

Following \cite[p.306-310]{HigsonRoe2} we denote by $C_{*}(M)$ the space of 
finitely supported simplicial chains on $M$ with complex coefficients. Then, 
for each $p$ the complex vector space $C_{p}(M)$ has a basis comprised of  the 
$p$-simplices on $M$. Define an inner product on $C_{p}(M)$ such that this basis is orthonormal. The 
completion of this space is denoted by $C^{l^{2}}_{p}(M)$, in other words, this is the Hilbert space of square summable $p$-chains on $M$. The differentials 
$\partial_{p}: C_{p}(M)\to C_{p-1}(M)$ extend to operators 
$b_{p}: C^{l^{2}}_{p}(M)\to C^{l^{2}}_{p-1}(M)$.

The operators $b_{p}$ are bounded if the number of simplices in the triangulated 
space $M$ with a common boundary is bounded, and this assumption can in turn be reduced to requiring that the number of simplices containing a point in the space $M$ is bounded. 
Such space $M$ is called of \textit{bounded geometry}. 

Also, the adjoint operators $b^{*}_{p}: C^{l^{2}}_{p-1}(M)\to C^{l^{2}}_{p}(M)$ 
identify with the extension of the co-boundary maps. This makes $(C^{l^{2}}_{*}(M), b)$ 
a complex of Hilbert spaces.

Denote by $C_{0}(M)$ the algebra of continuous functions vanishing at infinity.
Define a representation of $C_{0}(M)$ on $C^{l^{2}}_{*}(M)$ as follows: for
every $f\in C_{0}(M)$ and chain $c= \sum_{\sigma}  c_\sigma[\sigma]$, 
$$
f\cdot c= \sum_{\sigma} f(b_\sigma)c_\sigma[\sigma],
$$where $b_\sigma$ is the barycenter of the simplex $\sigma$.
With this and the bounded geometry assumption, one might interpret 
$(C^{l^{2}}_{*}(M), b)$ as a complex of Hilbert $C_{0}(M)$-modules, but these spaces 
are not in general finitely generated over this algebra, and the representation of 
$C_{0}(M)$ on $C^{l^{2}}_{*}(M)$ is not by chain maps.

On the other hand, as the action of $M \times G\to M$ is simplicial, the complex 
$C_{*}(M)$ has a natural action of this group defined by the formula
$$
c\cdot g= \sum_{\sigma} c_\sigma[\sigma]g =\sum_{\sigma} c_\sigma[\sigma g],
$$ for $g\in G$. The action is simplicial, so it commutes with the boundary map.
As the action either fixes simplices or permutes them, this action
is by unitaries, and it extends to a representation of the 
reduced $C^{*}$-algebra $C^{*}_r(G)$ of the group $G$. This means that
$(C^{l^{2}}_{*}(M), b)$ is a complex of $C^{*}_r(G)$-modules.
If the quotient $M/G$ is compact, then the modules  
$C^{l^{2}}_{p}(M)$ are finitely generated: one may assume that there is
a finite number of simplices in the triangulation of the compact quotient $X=M/G$ induced 
by the map $M\to M/G$, and this means that there is a finite number of 
$G$-orbits of simplices in $M$. 


In order to analyze Poincar\'e duality in this context one shall first give some 
explicit expression of the action of $G$ on cochains. If $u: C_{p}(M)\to \mathbb{C} $ a 
 $p$-cochain, this is defined by the rule
$$
(u\cdot g)[\sigma]=u([\sigma g^{-1}]), 
$$ for a simplex $\sigma \in C^{p}(M)$.

The Poincar\'e duality homomorphism of an oriented, possibly non-compact 
manifold $M$ is given by the intersection 
$[M]\cap u$ of the fundamental class of the manifold with a finitely supported 
cochain $u$. More precisely, let $u: C^{n-p}(M)\to \mathbb{C} $ be a finitely supported
$(n-p)$-cochain and $[M]=\sum_{\sigma}  (-1)^{\epsilon(\sigma)}[\sigma]$ be the fundamental class, where $\epsilon(\sigma)$ denotes the orientation of the simplex 
$\sigma$ induced by the orientation of the manifold $M$, and the sum runs over all $n$-simplices in the triangulation of $M$. Then, the 
Poincar\'e duality homomorphism $T_{p}:C^{n-p}(M)\to C^{p}(M)$ is defined by the formula
$$T_{p}(u)=[M]\cap u=\sum_{\sigma=[v_{0}\cdots v_{n}]}  (-1)^{\epsilon(\sigma)}u([v_{0}\cdots v_{n-p}])
[v_{n-p}\cdots v_{n}].$$
This map is $G$-equivariant, i.e. satisfies the identity 
$$
T_{p}(u\cdot g)=(T_{p}(u))\cdot g.
$$
Indeed, 
\begin{align*}
(T_{p}(u))\cdot g &= \left(\sum_{\sigma=[v_{0}\cdots v_{n}]}  (-1)^{\epsilon(\sigma)}u([v_{0}\cdots v_{n-p}])
[v_{n-p}\cdots v_{n}]\right)\cdot g=\\
&=\sum_{\sigma=[v_{0}\cdots v_{n}]}  (-1)^{\epsilon(\sigma)}u([v_{0}\cdots v_{n-p}])
[v_{n-p}\cdots v_{n}]g=\\
&=\sum_{\sigma=[v_{0}\cdots v_{n}]}  (-1)^{\epsilon(\sigma)}u(([v_{0}\cdots v_{n-p}]g)g^{-1})
[v_{n-p}\cdots v_{n}]g=\\
&=\sum_{\sigma=[v_{0}\cdots v_{n}]}  (-1)^{\epsilon(\sigma)}((u\cdot g) [v_{0}\cdots v_{n-p}]g)
[v_{n-p}\cdots v_{n}]g=\\
&=\sum_{\sigma g=[v_{0}\cdots v_{n}]g}  (-1)^{\epsilon((\sigma g )g^{-1})}(u\cdot g) [v_{0}\cdots v_{n-p}]
[v_{n-p}\cdots v_{n}]=
\\\\
&=\sum_{\gamma=[w_{0}\cdots w_{n}]}  (-1)^{\epsilon(\gamma )}(u\cdot g) [w_{0}\cdots w_{n-p}]
[w_{n-p}\cdots w_{n}]=\\\\
&=T_{p}(u\cdot g).
\end{align*}where $\gamma = \sigma g$ and in the last step we have used the identity 
$ \epsilon(\gamma g^{-1})=\epsilon(\gamma)$, that is, one must require that $g$ preserves orientation.
The equivariant map $T:C^{*}(M)\to C_{*}(M)$ satisfies the classic 
Poincar\'e duality identities and extends to a $G$-linear map 
$T:C^{l^{2}}_{*}(M)\to C^{l^{2}}_{*}(M)$. Then, if the dimension of $M$ is even, the operator 
$S: C^{l^{2}}_{*}(M)\to C^{l^{2}}_{*}(M)$ 
defined by the rule
$$S_{p}=i^{p(p-1)}T_{p}: C^{l^{2}}_{n-p}(M)\to C^{l^{2}}_{p}(M)$$
satisfies the properties: 
\begin{enumerate}
\item $S$ is self-adjoint,
\item $bS+Sb^{*}=0$ and 
\item $S$ induces an isomorphism from the homology of the dual complex  
$(C^{*}(M),b^*)$ to the homology of the complex  $(C^{*}(M),b)$.
\end{enumerate} Therefore, $(C^{*}(M),b,S)$ is an algebraic 
Hilbert-Poincar\'e complex over $C^{*}_r(G)$ and has algebraic signature 
in $K_{0}( C^{*}_r(G))$ as in definitions  \ref{AsmishSignature} or 
\ref{HigsonRoeSignature}.

With this structure, one obtains another proof of the following:

\begin{proposition}
The signature of Mishchenko is a homotopy invariant.
\end{proposition}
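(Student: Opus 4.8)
The plan is to deduce homotopy invariance from the corresponding result of Higson and Roe, transporting it through the coincidence of the two signatures already established. By Proposition~\ref{coincidence}, the Mishchenko signature of the algebraic Hilbert--Poincar\'e complex $(C^{l^{2}}_{*}(M),b,S)$ coincides, in $K_{0}(C^{*}_r(G))$, with its Higson--Roe signature. Higson and Roe prove in \cite[theorem 4.3]{HigsonRoe1} that two homotopy equivalent Hilbert--Poincar\'e complexes, in the sense of \cite[def.4.1, p.285]{HigsonRoe1}, have the same signature. Thus it suffices to show that an orientation-preserving, proper, co-compact $G$-homotopy equivalence $f\colon M\to M'$ of triangulated $G$-manifolds induces a homotopy equivalence of the associated algebraic Hilbert--Poincar\'e complexes over $C^{*}_r(G)$; the statement then follows at once.

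First I would arrange $f$ to be simplicial. By the uniqueness of the $G$-invariant piece-wise linear structure up to barycentric subdivision quoted from \cite{Illman},\cite{Korppi}, after passing to a common subdivision one may assume that $f$ is $G$-simplicial, so that the induced map on finitely supported simplicial chains $f_{*}\colon C_{*}(M)\to C_{*}(M')$ is a $G$-equivariant chain map. Under the bounded geometry hypothesis and co-compactness of the quotients, each simplex of $M'$ has a uniformly bounded number of preimages, so $f_{*}$ extends to a bounded operator $A\colon C^{l^{2}}_{*}(M)\to C^{l^{2}}_{*}(M')$; being a bounded $G$-equivariant map between finitely generated projective Hilbert $C^{*}_r(G)$-modules, it is $C^{*}_r(G)$-linear and adjointable. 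Since $f$ is a homotopy equivalence, $f_{*}$ is a chain homotopy equivalence whose homotopies, by the same bounded geometry argument, extend to bounded $C^{*}_r(G)$-linear operators, so $A$ is a chain homotopy equivalence of complexes of Hilbert $C^{*}_r(G)$-modules.

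It then remains to compare the duality operators. The Poincar\'e duality maps $T$ and $T'$, and hence the operators $S=i^{p(p-1)}T_{p}$ and $S'$ constructed above, are natural up to chain homotopy for a degree-one orientation-preserving map; consequently $A\,S\,A^{*}$ and $S'$ differ by a chain homotopy that is again realized by bounded $C^{*}_r(G)$-linear operators. Together with the self-adjointness of $S$ and the relation $bS+Sb^{*}=0$ recorded in the construction of $(C^{l^{2}}_{*}(M),b,S)$, this exhibits $A$ as a homotopy equivalence of algebraic Hilbert--Poincar\'e complexes in the sense of \cite[def.4.1]{HigsonRoe1}. Invoking \cite[theorem 4.3]{HigsonRoe1} and Proposition~\ref{coincidence} gives equality of the Mishchenko signatures of $M$ and $M'$.

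The hard part will be the verification that the chain homotopy equivalence, and especially the homotopy $A\,S\,A^{*}\simeq S'$, can be realized by \emph{bounded, adjointable} operators over $C^{*}_r(G)$ that respect the self-adjoint duality structure. Boundedness is exactly where the bounded geometry assumption and the fact that $G$ either fixes or permutes simplices (so that it acts by unitaries) are indispensable, while adjointability and $C^{*}_r(G)$-linearity follow from $G$-equivariance together with finite generation. The naturality of Poincar\'e duality must be upgraded to a \emph{controlled} chain homotopy compatible with the requirement, used already in the construction, that each $g\in G$ preserves orientation; once this boundedness and equivariance bookkeeping is settled, the conclusion is immediate.
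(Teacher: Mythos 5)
Your proposal follows exactly the paper's route: the paper's own proof is precisely the reduction you give, namely combining Proposition~\ref{coincidence} with Higson--Roe's homotopy invariance of the signature (theorem 4.3 of \cite{HigsonRoe1}), applied to the algebraic Hilbert--Poincar\'e complex over $C^*_r(G)$ constructed in that section. The geometric verification you sketch (equivariant simplicial approximation, boundedness and adjointability of the induced chain equivalence, naturality of the duality operator up to controlled homotopy) is left entirely implicit in the paper, so your write-up is, if anything, more detailed than the original.
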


\proof This is theorem 4.3 of \cite{HigsonRoe1} applied to the signature defined there and recalled here as \ref{HigsonRoeSignature}, but using
the algebraic Hilbert-Poincar\'e complex over the algebra $C_r^*(G)$ that we have just constructed. These signatures 
coincide by proposition \ref{coincidence}. 
$\hfill \square$

\begin{remark}
The construction of this Hilbert-Poincar\'e complex has been presented by Mishchenko in several conference talks before 2010, so the authors claim no originality. We refer to  
\cite[sec.3]{MishchenkoAlmost} and check that this complex satisfies the definition given there.
\end{remark}

\subsection{The analytic signatures of a smooth $G$-manifold} 
Here we generalize the $C^{*}$-categories considered in \cite{HigsonRoe2} and
reinterpret the complex $(C^{*}(M),b,S)$ as an equivariant  
analytically controlled Hilbert-Poincar\'e complex. 
Then we show that the results about invariance of the analytic signature can be applied 
to bounded geometry spaces with \textit{bounded isotropy} action, and that this is the 
case for proper spaces with bounded geometry quotient.

\begin{definition}
Let $M$ be a proper metric space. An $M$-module $H$ is a separable Hilbert space equipped with a non-degenerate representation of the $C^*$-algebra $C_0(M)$ of continuous, complex-valued functions on $M$ vanishing at infinity.
\end{definition}

\begin{definition}\label{presentation}
Let $G$ a finitely generated discrete group. A $G$-presented space $X$ is a proper geodesic metric space presented as the quotient $X=M/G$ of a proper geodesic metric space $M$ by an isometric proper action $\mu: G\times M \to M$ of the group $G$. 
The pair $(M, \mu)$ is called a $G$-presentation of $X$. 
\end{definition}

For fixed discrete  group  $G$ and space $X$,  the presentations of $X$ together with equivariant maps form a category. We avoid the action in the notation and say that $M$ is a $G$-presentation of $X$. We shall assume in the following that all such presentations have an invariant non-empty open set where the action of the group $G$ is free. 


\begin{definition}
An equivariant $G$-$X$-module is an $M$-module $H$, where $M$ is a $G$-presentation of $X$ equipped with a compatible (faithful) unitary representation of $G$.
\end{definition}

In the case of an equivariant $G$-$X$-module $M$ we will require that the representation of the $C^*$-algebra $C_0(M)$ restricts to a non-degenerate representations of the subalgebra $C_0(U)$ for a $G$-invariant non-empty open set $U\subset M$.

Given a  locally  compact, separable  and  metrizable  space,  together  with  a   non-degenerate representation  on  the  Hilbert  space $H$,  that  is,  a  nondegenerate continuous  $*$-homomorphism
$$\rho: C_0(M)\to B(H),$$
we  define  the  support of $\nu\in H$   to  be  the  complement  in $X$  of  the  union  of  all open  subsets $U\subset X$  such  that  $\rho(f)(\nu)=0$ for  all $f\in C_0(U)$. 
An  operator $T\in B(H)$ is  locally  compact on $X$  if  $fT$ and $Tf$  are  compact  operators  for  all  functions  $f\in C_0(M)$.

\begin{definition}
 
The  support  of  an  operator  $T \in B(H)$, denoted  by  $Supp(T)$,   is  the  complement  in $X\times X$ of  the  union  of  all open  subsets $U\times V \subset  X\times X$ such  that  $\rho(f)T\rho(g)=0$ for  all $f\in C_0(U)$  and  $g\in C_0(V)$. More  generally , if $C_0(X)$  and $C_0(Y)$  are non-degnerately represented   on  Hilbert  spaces  $H_X$  and $H_Y$,  then  the  support  of  a   bounded  operator $T:H_X\to H_Y$  is  the  complement  in  $Y\times X$  of  the  union  of   all  open  subsets $U\times V\subset  Y\times  X$ such  that $\rho_Y(f)T\rho_X(g)=0$ for all $f\in C_0(U)$ and $g\in C_0(V)$. 

\end{definition}

\begin{definition}
Let  $X$  be  a  locally  compact   separable  and  metrizable  space,  proper  in the  sense  of  metric  geometry, meaning  that  closed  balls  are  compact. Let $\rho: C_0(X)\to B(H)$  be  a nondegenerate  representation on the  Hilbert space $X$. 

An  operator $T\in B(H)$  is  boundedly   controlled  if  the  support $Supp(T)$ is   at  bounded  distance  of  the  diagonal   in $X\times X$,  that  means 
$$\sup_{y\in Supp(T), x \in \Delta(X)} \{ (d_{X\times X}(y, x) \}<\infty. $$

An  operator $T$ is  locally  compact  on $X$ if  $fT$ and $Tf$  are  compact  for  all  functions $f\in C_0(X)$. 

Given  an  operator $T\in B(H)$,  we  define  its   propagation $Prop(T)$, to  be  the   following extended  real  number: 
$$ Prop(T)= sup \{d_{X\times X }(d(x,y)\mid  x, y\in Supp(T)\},$$ 
and   will  say  that  an operator  is  of  finite  propagation  if this  number  is   finite. 

\end{definition}

\begin{definition}
The  category  
$\mathfrak{A}(G, M)=\mathfrak{A}(X, G, M)$   is  the  category  where  the  objects are  
equivariant $G$-$X$-modules for a fixed presentation $M$,  and the  morphisms  are norm  limits  of  $G$-equivariant,  bounded,  finite  propagation, pseudolocal operators between $G$-$X$-modules. The  ideal  $\mathfrak{C}(G, M)=\mathfrak{C}(X,G, M)$  is  the category  with  the  same  objects  as $\mathfrak{A}(G, M)$, and   morphisms   given  by  norm  limits  of $G$-equivariant, bounded, locally compact operators. 
\end{definition}

The category $\mathfrak{A}(G, M)$  and its ideal
$\mathfrak{C}(G, M)$,  are  defined in  an  analogous  way  to  the categories 
$\mathfrak{A}(X) $ and $\mathfrak{C}(X)$, compare \cite[p. 304]{HigsonRoe2}.

\begin{definition}
Let $X$ be  a  proper (both  in  the  sense  of  group actions and  metric  geometry), locally  compact and  metrizable $G$-space.  $D:H\to H $ be  a  bounded  selfadjoint  operator .   We  will  say  that $T$  is  analytically  controlled  if it is controlled over 
$(\mathfrak{A}(X,G,M), \mathfrak{C}(X,G,M))$ in the sense of definition 
\ref{controlledoperator}.

\end{definition}

We  will  now  include  for  the  sake  of  completness  the  following  notion  of   geometrically  controlled operator, which  will  be  relevant  for  the comparison  with  Hilbert-Poincar\'e  complexes (see  Def. 5.3 and 5.5 in \cite{HigsonRoe1}  for more  details  on  geometric  
control): 

\begin{definition}
Let  $X  $  be  a  geodesic, proper space (in the  sense  of  metric  geometry, meaning  that  closed balls  are  compact). A  complex  based  vector  space $V$ is  called  geometrically  controlled  over $X$  if  it  is  provided  with a  basis $B\subset V$,  and a  function $c:V\to X$ with  the  following  property: for  every $r>0$, there is  an $N<\infty $  such  that if $S\subset X$  has diameter  less  than  $R$, then $c^{-1}(S)$  has  cardinality  less  than $N$. 

A linear  transformation $T:V\to W$  between  geometrically  controlled  spaces is  geometrically   controlled  if  
\begin{itemize}
\item The   matrix  coefficients  with  respect  to  the  basis  are  uniformly  bounded.
\item There  exists some $C>0$ such  that  the  $(v,w)$-matrix  coefficient  is  zero  whenever $d(c(v), c(w))>C$. 
\end{itemize}

\end{definition}


For $X$ compact, one can now proof the analogous of lemma 2.12 in \cite{HigsonRoe2}:

 \begin{lemma}\label{CategoryEquivalence}
Let $M$ be a proper (metric) space and $G\times M\to M$ be a proper effective action 
with $X=M/G$ a compact space. Assume that $M$ is provided with a $G$-invariant measure which is  finite on compact subsets. Then the $C^*$-algebra of endomorphisms of a non-trivial 
object in $\mathfrak{C}(X, G, M)$ is Morita equivalent to $C^*_r(G)$ and, therefore, their $K$-theories are isomorphic.
  \end{lemma}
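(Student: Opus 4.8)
The plan is to prove the lemma by exhibiting an explicit imprimitivity bimodule implementing a Morita equivalence between the endomorphism $C^*$-algebra $J:=\mathrm{End}_{\mathfrak{C}(X,G,M)}(H)$ and $C^*_r(G)$. First I would reduce to a single convenient object: in the $C^*$-category $\mathfrak{C}(X,G,M)$ the morphism spaces $\mathrm{Hom}(H,H')$ and $\mathrm{Hom}(H',H)$ provide a Morita context between the endomorphism algebras of any two non-trivial objects, so it is enough to establish the claim for one absorbing (ample) module. A natural choice is $H=L^2(M,\mu)\otimes\ell^2(\mathbb{N})$, with $C_0(M)$ acting by multiplication and $G$ acting by the unitaries induced from the $\mu$-preserving action on $M$ (tensored with the identity); one checks directly that this is an equivariant $G$-$X$-module in the sense of the definitions above.

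Since the action is proper and $X=M/G$ is compact, it is cocompact, so I can fix a cutoff function $\chi\in C_c(M)$, $\chi\ge 0$, with $\sum_{g\in G}\chi(g^{-1}x)^2=1$ for all $x\in M$, the sum being locally finite by properness. Together with the $G$-invariant measure $\mu$, this cutoff is the device that turns local compactness plus $G$-equivariance into a single compact ``coefficient'': for $T\in J$ the operator $\chi T\chi$ is compact by local compactness, while $G$-equivariance lets one recover $T$ from the $G$-indexed family $\{\chi\,gT\,\chi\}_{g\in G}$.

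The core step is to build the bimodule $\mathcal{E}$. I would take $\mathcal{E}$ to be the completion of $\chi\cdot H$ equipped with the two inner products
$$\langle\xi,\eta\rangle_{C^*_r(G)}=\sum_{g\in G}\langle\xi,g\cdot\eta\rangle_{H}\,[g]\in C^*_r(G),\qquad {}_{J}\langle\xi,\eta\rangle=\sum_{g\in G} g\,\theta_{\xi,\eta}\,g^{-1},$$
where $\theta_{\xi,\eta}$ is the rank-one operator $\zeta\mapsto\xi\langle\eta,\zeta\rangle$, together with the evident left $J$- and right $C^*_r(G)$-actions. I would then verify: (a) the $C^*_r(G)$-valued form lands in $C^*_r(G)$ and is positive; (b) ${}_{J}\langle\xi,\eta\rangle$ is $G$-equivariant and locally compact, hence in $J$, because each $\theta_{\xi,\eta}$ is compact and the $G$-sum is locally finite near the support of any $f\in C_0(M)$; (c) the compatibility ${}_{J}\langle\xi,\eta\rangle\,\zeta=\xi\cdot\langle\eta,\zeta\rangle_{C^*_r(G)}$; and (d) fullness of both inner products. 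Completing $\mathcal{E}$ then yields the desired imprimitivity bimodule, and the isomorphism $K_*(J)\cong K_*(C^*_r(G))$ follows since Morita-equivalent $C^*$-algebras have isomorphic $K$-theory.

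The hard part will be the fullness of the $C^*_r(G)$-valued inner product, that is, showing that the closed ideal it generates in $C^*_r(G)$ is the whole algebra rather than a proper ideal. This is exactly where effectiveness and the standing assumption that every presentation carries a non-empty $G$-invariant open set $U$ on which $G$ acts freely become indispensable: vectors supported in a fundamental domain of the free part $U$ produce, through $\langle\cdot,\cdot\rangle_{C^*_r(G)}$, elements whose coefficients recover all finitely supported functions on $G$, hence a dense copy of $\mathbb{C}[G]$. Points with non-trivial (necessarily finite, by properness) stabilizers contribute only averaged, stabilizer-symmetric vectors and do not obstruct fullness once $U$ is available. A secondary technical point, handled with the cutoff and the finiteness of $\mu$ on compact sets, is the convergence and norm-boundedness of the infinite $G$-sums defining the two inner products and the module actions.
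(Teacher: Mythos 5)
Your core construction is, in substance, the same as the paper's. The paper also takes the dense subspace of compactly supported vectors, equips it with the $\mathbb{C}[G]$-valued pairing $\langle\langle v,w\rangle\rangle=\sum_{g\in G}\langle v\cdot g,w\rangle[g]$, completes it to a Hilbert $C^*_r(G)$-module $H_G$, and identifies $\mathrm{End}_{\mathfrak{C}(X,G,M)}(H)$ with the compact operators of $H_G$ by appealing to Lemmas 2.2 and 2.3 of \cite{roeComparing}; since a full Hilbert $A$-module $E$ is canonically a $(\mathcal{K}(E),A)$-imprimitivity bimodule, your two-inner-product bimodule, with ${}_{J}\langle\xi,\eta\rangle=\sum_{g}g\,\theta_{\xi,\eta}\,g^{-1}$ the equivariantization of the rank-one operator, is the same mechanism made explicit. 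Your localization of the fullness problem in the free invariant open set $U$ is also exactly where the paper's computation lives: it takes $v$ supported in a set $U\subset O$ with $gU\cap U=\emptyset$ for $g\neq 1$ and shows that every $[g]$, hence all of $\mathbb{C}[G]$, arises as an inner product.

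The genuine gap is your opening reduction. The lemma asserts the Morita equivalence for \emph{every} non-trivial object, and you dispose of this in one sentence by asserting that $\mathrm{Hom}(H,H')$ and $\mathrm{Hom}(H',H)$ form a Morita context between the endomorphism algebras of \emph{any} two non-trivial objects. A Morita context yields an equivalence (and hence a $K$-theory isomorphism) only when both pairings are full, i.e. only when $\overline{\mathrm{span}}\bigl(\mathrm{Hom}(H',H)\circ\mathrm{Hom}(H,H')\bigr)$ is dense in $\mathrm{End}_{\mathfrak{C}}(H)$ and symmetrically; for an arbitrary non-trivial $H'$ paired against your ample $H=L^2(M,\mu)\otimes\ell^2(\mathbb{N})$ this fullness is not automatic, and proving it --- approximating an arbitrary $G$-equivariant locally compact operator on $H'$ by operators factored equivariantly through $H$ --- is essentially as hard as the lemma itself. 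The paper avoids the issue entirely by running the Hilbert-module construction on an arbitrary non-trivial object (its Step (i)) and using the concrete module $L^2(M,\mu)$ only as an anchor (its Step (ii)). As written, your argument establishes the statement only for your one chosen module. The repair is cheap, however: nothing in your verifications (a)--(d) actually uses ampleness, so you should discard the reduction, replace $\chi\cdot H$ by the compactly supported vectors of an arbitrary non-trivial $H$ (non-zero and dense by non-degeneracy and the standing assumption on the free invariant open set), and run the same bimodule construction there.
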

  
\proof \, We will follow two steps in the proof: 

\begin{enumerate}
\item Every non-trivial $G$-$X$-module $H$ with an effective representation of $G$ by unitaries contains a non-trivial subspace which can be endorsed with the structure of a Hilbert module over $C_r(G)$ whose algebra of compact operators in the sense of Hilbert modules is isomorphic to the algebra of endomorphisms of $H$ in the category $\mathfrak{C}(X, G, M)$.
\item There  is an example of a Hilbert $G$-$X$-module $H$ such that this algebra of compact operators is isomorphic to $C_r(G)$.
\end{enumerate}

Step (i). Let $H$ be a non-trivial $G$-$X$-module for the presentation $M$ (i.e. a $C_0(M)$-module) with a non-trivial representation of $C_0(M)$ and compatible effective representation of the group $G$ by unitary operators in the sense that $(f\cdot v)\cdot g = f^g\cdot (v \cdot g)$ for every $f\in C_0(M), v\in H, g\in G$, where $f^g(x)=f(gx), x\in M$. Recall that a $v\in H$ is said to be compactly supported if there is a function $f\in C_c(M)$ acting as the identity on $v$, i.e. $f\cdot v=v$. Denote by $H_{C_c(M)}$ the vector space of such elements. This space is non-trivial: take $f\in C_c(M)$ and $v\in H$ such that $f\cdot v\neq 0$ and choose $h\in C_c(M)$ such that $hf=f$, then one has $h\cdot (f\cdot v)=(hf)\cdot v=f\cdot v$. 

Define a $\mathbb{C}[G]$-valued inner product on $H_{C_c(O)}$ by 
$$
\langle\langle v, w \rangle\rangle= \sum_{g\in G} \langle v\cdot g, w \rangle[g],
$$ where $O\subset M$ is a non-empty $G$-invariant open subset such that the action 
$G\times O\to O$ is free. Note that, if $supp(f)\cap supp(h)=\emptyset $ then $fh=0$. Assume that $supp(f)\subset U$ for some $U \subset O$ such that $gU\cap U=\emptyset$ for $g\neq 1$. This means that  $supp(f)\cap supp(f^g)=\emptyset $, therefore 
\begin{align*}
\langle\langle v, v \rangle\rangle &=\sum_{g\in G} \langle v\cdot g, v \rangle[g]=\\
                                                   &=\sum_{g\in G} \langle (f\cdot v)\cdot g, f\cdot v \rangle[g]=\\ 	  						&=\sum_{g\in G} \langle f^g\cdot (v\cdot g), f\cdot v \rangle[g]=\\
&=\sum_{g\in G} \langle (\bar f f^g)\cdot (v\cdot g), v \rangle[g]=\\
&=\langle (\bar f f)\cdot v, v \rangle[1]=\langle f\cdot v, f\cdot v \rangle[1]=
\langle v,  v \rangle[1]=[1],
\end{align*} if $\|v\|=1$. Here we have used that the adjoint of the operator $f: H \to H$ is the conjugate $\bar f$ of this function, which has the same support.

Similarly
\begin{align*}
\langle\langle v, v\cdot g \rangle\rangle &=\langle (\bar f^g f^g)\cdot (v\cdot g), v\cdot g \rangle[g]=\langle f^g\cdot (v\cdot g), f^g\cdot (v\cdot g)\rangle[g]=\\
&=\langle v\cdot g,  v \cdot g\rangle[g]=\langle v,  v \rangle[g]=[g].
\end{align*} 
This computations show that every element in the group algebra $\mathbb{C}[G]$ can arise as the inner product of some elements in $H_{C_c(O)}$.

Then one checks positivity, completes simultaneously $\mathbb{C}[G]$ to $C_r(G)$   and $H_{C_c(O)}$ to a Hilbert $C_r(G)$-module  $H_{G}$ and writes 
$$
\overline{H_{C_c(O)}}=H_{G}\otimes_\lambda l^2(G)
$$where $\lambda: C_r(G)\to \frak{B}(l^2(G))$ is the left regular representation.
Note that, as $C_c(O)$ is dense in $C_0(O)$ and $C_0(O)H$ is dense on $H$, one has that  
$C_c(O)H$ is also dense in $H$ and $\overline{H_{C_c(O)}}=H$.

Then one proves by analogy with lemma 2.2 and lemma 2.3 of \cite[p.243,244]{roeComparing} that the algebra of compact operators of $H_{G}$ in the sense of Hilbert modules is isomorphic to the algebra of endomorphisms of $H$ in the category $\mathfrak{C}(X, G, M)$.
 
 Step (ii). The example is the Hilbert space is the completion $L^2(M)$ of $C_c(M)$ with respect to the norm defined by the complex-valued inner product 
 $$
 \langle f, g \rangle = \int \bar f(x) h(x)d\mu(x)
 $$ where $\mu$ is a $G$-invariant measure finite on compact subsets. The proof that this is an example are precisely  lemma 2.2 and lemma 2.3 of \cite[p. 243,244 ]{roeComparing}.
 $\hfill\square$

Let $M$ be a simplicial complex, and  let  $G\times M \longrightarrow M$ be  a proper simplicial action of a discrete group $G$.  Assume that the quotient $M/G$ is compact. Let $\mathfrak{F}_M$ the family of (finite) subgroups of $G$ having non empty fixed point set in $M$, i.e. 
$$\mathfrak{F}_M= \{H < G\, \vert \, M^{H}\neq \emptyset \},$$
where
$$M^{H}=\{x\in M\, \vert \, hx=x, \text{ for every } h\in H \}.$$

\begin{definition}
The action $G\times M \longrightarrow M$ is said to be of bounded isotropy if the order of the elements in
$\mathfrak{F}_M$ is uniformly bounded, i.e. there is a constant $c_M$ such that $\vert H \vert < c_M$ for 
every $H\in \mathfrak{F}_M$. 
\end{definition}

\begin{lemma}
If the quotient $X=M/G$ is of bounded geometry and the action $G\times M \longrightarrow M$ is of bounded isotropy, then $M$ is of bounded geometry.
\end{lemma}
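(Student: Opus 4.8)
The plan is to reduce bounded geometry of $M$ to a uniform bound on the number of simplices containing each \emph{vertex}, then to push this count down to the quotient $X=M/G$ along the simplicial projection $\pi\colon M\to X$, controlling the fibres by the isotropy constant $c_M$.

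First I would dispose of the passage from points to vertices. A point $p$ in the relative interior of a simplex $\tau$ is contained precisely in $\tau$ and its cofaces, so every simplex containing $p$ also contains any chosen vertex $v$ of $\tau$; hence the number of simplices through $p$ is at most the cardinality of the star $\mathrm{St}(v)=\{\sigma : v\in\sigma\}$. It therefore suffices to bound $|\mathrm{St}(v)|$ uniformly over all vertices $v$ of $M$. Bounded geometry of $X$ supplies a constant $N_X$ with $|\mathrm{St}(\bar w)|\le N_X$ for every vertex $\bar w$ of $X$.

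Next I would use the projection. For a vertex $v$ with image $\bar v=\pi(v)$, the assignment $\sigma\mapsto\pi(\sigma)$ carries $\mathrm{St}(v)$ into $\mathrm{St}(\bar v)$, so it is enough to bound its fibres. Fix $\sigma\in\mathrm{St}(v)$ and set $\bar\sigma=\pi(\sigma)$; since the simplices of $X$ are the $G$-orbits of simplices of $M$, every simplex of $M$ over $\bar\sigma$ has the form $g\sigma$ with $g\in G$. The key step is that $g\sigma\in\mathrm{St}(v)$ forces $g\in G_v$: because $v$ is a vertex of the complex lying in $g\sigma$, it is a vertex of $g\sigma$, so $g^{-1}v$ is a vertex of $\sigma$ having the same image under $\pi$ as $v$; the standing triangulation hypothesis (every simplex is fixed point-wise or permuted) makes $\pi$ injective on the vertices of each simplex, whence $g^{-1}v=v$. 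Thus the fibre over $\bar\sigma$ is contained in $\{g\sigma : g\in G_v\}$, which has at most $|G_v|$ elements.

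Finally, properness of the action makes $G_v$ finite, and $v\in M^{G_v}$ places $G_v$ in $\mathfrak{F}_M$, so bounded isotropy yields $|G_v|<c_M$. Combining the three bounds gives $|\mathrm{St}(v)|\le |\mathrm{St}(\bar v)|\cdot c_M\le N_X c_M$, independently of $v$, and therefore $M$ is of bounded geometry. I expect the fibre count to be the only delicate point: it is exactly there that the regularity of the $G$-triangulation, i.e.\ injectivity of $\pi$ on each simplex, is needed to exclude contributions from group elements outside the stabilizer $G_v$.
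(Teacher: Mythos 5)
Your proof is correct and takes essentially the same approach as the paper's: push the collection of simplices containing a point down to the quotient $X=M/G$ and bound the fibres of this projection by the isotropy constant $c_M$, giving $\#S(x)\leqslant N\cdot c_M$. The paper's proof is a three-line version that simply asserts this fibre bound; your reduction to vertex stars and the stabilizer argument $g\in G_v$ (resting on the regularity of the $G$-triangulation) supply exactly the justification the paper leaves implicit.
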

\proof Take a point $x \in M$ and let $S(x)$ the set of simplices containing $x$. Denote by $p: M \longrightarrow M/G$ the projection on the quotient. Then $p(S(x))=S(p(x))$ and, therefore, 
$\#S(x)\leqslant \#S(p(x))\cdot c_M\leqslant N\cdot c_M$, where $N$ is the bound on the number of simplices containing a point in $M/G$. $\hfill\square$

\begin{lemma}
A proper space $M$ with proper, co-compact proper action $G\times M \to M$ 
of a discrete group $G$ is of bounded isotropy.
\end{lemma}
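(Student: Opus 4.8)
The plan is to exploit cocompactness and properness directly, producing a single finite subset of $G$ that contains, up to conjugacy, every isotropy group occurring in $\mathfrak{F}_M$. First I would use cocompactness together with the local compactness of $M$ (which holds because $M$ is proper in the metric sense, so closed balls are compact) to produce a compact set $K\subset M$ whose $G$-translates cover $M$, that is $GK=M$. This is standard: the orbit map $p\colon M\to M/G$ is open, so the $p$-images of the interiors of compact neighborhoods of points of $M$ form an open cover of the compact space $M/G$; extracting a finite subcover and taking the union of the corresponding compact neighborhoods yields such a $K$.

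Next I would invoke properness of the action to conclude that the set
$$\Gamma_K=\{\,g\in G\mid gK\cap K\neq\emptyset\,\}$$
is finite; for a discrete group this is precisely the statement that the action is proper, i.e. that $(g,x)\mapsto(gx,x)$ defines a proper map $G\times M\to M\times M$. The heart of the argument is then a conjugation trick. Given $H\in\mathfrak{F}_M$, I would pick $x\in M^{H}$ and, using $GK=M$, an element $g\in G$ with $y:=g^{-1}x\in K$. For every $h\in H$ one has $hx=x$, hence $(g^{-1}hg)y=g^{-1}hx=g^{-1}x=y\in K$; since $y\in K$ as well, this shows $(g^{-1}hg)K\cap K\neq\emptyset$, so $g^{-1}hg\in\Gamma_K$. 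Thus the conjugate subgroup $g^{-1}Hg$ is contained in $\Gamma_K$, whence $\vert H\vert=\vert g^{-1}Hg\vert\leqslant\vert\Gamma_K\vert$. Setting $c_M=\vert\Gamma_K\vert+1$ gives the required uniform bound $\vert H\vert<c_M$ for all $H\in\mathfrak{F}_M$.

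The only step requiring genuine care is the first one, the existence of a compact $K$ with $GK=M$, where local compactness of $M$ and openness of the orbit map enter. Once $K$ and the finite set $\Gamma_K$ are in hand, the bound on the isotropy orders follows formally from the definition of a proper action of a discrete group.
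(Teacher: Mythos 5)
Your proof is correct, and it takes a more elementary route than the paper's. The paper's argument invokes slice-type data: it chooses a finite family $(U_i, G_i)$, $i=1,\dots,N'$, of open sets $U_i \subset M$ and finite subgroups $G_i < G$ such that whenever $x \in gU_i$ one has $G_x < gG_ig^{-1}$, and then takes $\beta = \max_i |G_i|$ as the bound; the existence of such a family is essentially Palais's slice theorem for proper actions combined with cocompactness to extract a finite subfamily. You instead work directly from the definition of properness for a discrete group: you build a compact $K$ with $GK=M$ from cocompactness and local compactness, observe that $\Gamma_K=\{g \in G \mid gK\cap K \neq \emptyset\}$ is finite, and then conjugate any $H \in \mathfrak{F}_M$ into $\Gamma_K$ via an element moving a fixed point of $H$ into $K$. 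Your single compact set $K$ and return set $\Gamma_K$ play the role of the paper's whole family of slices $(U_i,G_i)$ at once. What your approach buys is self-containedness — no appeal to the slice theorem, only the characterization of proper discrete actions via finiteness of $\Gamma_K$ — and it even yields finiteness of the groups in $\mathfrak{F}_M$ rather than assuming it; what the paper's approach buys is brevity given machinery (Palais) that it already cites elsewhere, and the local slice structure itself, which is useful for other local arguments. Both proofs share the same skeleton: cover $M$ cocompactly by finitely many pieces and bound isotropy on each piece up to conjugacy.
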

\proof Choose finite family  $(U_{i},G_{i}), \; i=1, \dots, N'$ such that $U_{i}\subset M$
are open subsets and $G_{i}< G$ are finite subgroups such that, if a point $x \in gU_{i}$ for some $g\in G$, then one has that $ G_{x}< gG_{i}g^{-1}$. Therefore 
$\beta= \max_{i} |G_{i}|$ is a bound on the orders of isotropy groups of points in 
the space $M$. $\hfill\square$

We  will  not  discuss the   functorial  properties  of  the $C^*$-alebras  associated  to coarse  structures  of  a proper  metric space, called  in the   literature  ``morphism covering a  coarse  map".  However,  we  will  need  a  restriction map for  the  inclusion  of  a boundary component   into  a  a  bordism   satisfying  some  additional  assumptions,  see the  coments  preceeding  section \ref{sectionbordism}.

\begin{definition}
Let  $X$  be   a  proper  space and $M$ a $G$-presentation of $X$.   A Hilbert-Poincar\'e complex is equivariantly analytically controlled if it is analytically controlled over 
$(\mathfrak{A}(X, G, M),\mathfrak{C}(X, G, M))$, i.e. the modules in the complex are objects of these categories, the operator $B=b+b^*$ is controlled over $(\mathfrak{A}(X, G, M),\mathfrak{C}(X, G, M))$ and the duality operator $S$ is a morphism in the category $\mathfrak{A}(X, G, M)$.
\end{definition}

In the following, by controlled in the case of a complex of Hilbert modules we mean equivariantly analytically controlled and in the case of an operator we mean controlled over $(\mathfrak{A}(X, G, M),\mathfrak{C}(X, G, M))$.


\begin{theorem}\label{HomBordInvariance}
If the quotient $X=M/G$ is of bounded geometry and the action $G\times M \longrightarrow M$ is of bounded isotropy and orientation preserving, then its Higson-Roe non-commutative signature is a homotopy and bordism invariant in the controlled category.
\end{theorem}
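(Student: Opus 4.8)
The plan is to deduce both invariance statements from the abstract invariance theorems that Higson and Roe establish for analytically controlled Hilbert-Poincar\'e complexes, by checking that our generalized equivariant control pair and our geometric complex fit their formal framework. Their proofs of homotopy invariance (Theorem 4.3 of \cite{HigsonRoe1}) and of bordism invariance (in \cite{HigsonRoe2}) appeal only to the axioms recorded in Definitions \ref{controlledoperator}--\ref{controlledHPcomplex}: the functional-calculus conditions on $B=b+b^{*}$ relative to the ideal, and the requirement that the duality operator be a morphism of the ambient $C^{*}$-category. Consequently, once I know that $(\mathfrak{A}(X,G,M),\mathfrak{C}(X,G,M))$ is a $C^{*}$-category together with an ideal and that $(C^{l^{2}}_{*}(M),b,S)$ is analytically controlled over it, the two invariance assertions will follow with essentially the same arguments, now read inside the equivariant categories.

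The first concrete step is to supply the geometric input. By the lemma immediately preceding the theorem, bounded geometry of $X=M/G$ together with bounded isotropy of the action forces $M$ to be of bounded geometry. This is exactly what makes the chain complex controlled: the boundary maps $b_{p}$ extend to bounded operators, and since a boundary operator moves a simplex only to its faces, $B=b+b^{*}$ is a $G$-equivariant operator of finite propagation. I would then verify the resolvent and bounded-transform conditions of Definition \ref{controlledoperator} for $B$ by noting that functions of $B$ are norm limits of finite-propagation, $G$-equivariant operators, so $(B\pm iI)^{-1}$ lands in $\mathfrak{C}(X,G,M)$ and $B(1+B^{2})^{-1/2}$ in $\mathfrak{A}(X,G,M)$. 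Finally, the duality operator $S=i^{p(p-1)}T_{p}$ is $G$-equivariant by the equivariance of the cap product computed above and has finite propagation because $[M]\cap u$ displaces each simplex only within its own span; hence $S$ is a morphism of $\mathfrak{A}(X,G,M)$, and $(C^{l^{2}}_{*}(M),b,S)$ is equivariantly analytically controlled.

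With control in hand, homotopy invariance is the controlled analogue of Theorem 4.3 of \cite{HigsonRoe1}: a controlled homotopy equivalence of Hilbert-Poincar\'e complexes produces an equivalence between the positive projections of $B+S$ and $B-S$ modulo the ideal, leaving the class $[P_{+}]-[P_{-}]$ in $K_{n}(\mathfrak{C}(X,G,M))$ unchanged. For bordism invariance I would transport the algebraic computation of Lemma \ref{boundaryduality} and Theorem \ref{algebraicbordisminvariance} into the controlled setting. Starting from a controlled Hilbert-Poincar\'e complex with boundary, the induced boundary duality and all the auxiliary operators appearing in the diagram chase---the chain homotopies $h$ and $F$, the corner $S_{2}$ of the duality matrix, and the map $f=(\,h\ F\,)$---are $G$-equivariant and of finite propagation, so the factorization of $S_{0}$ recorded in (\ref{sign0}) is an identity of morphisms in $\mathfrak{A}(X,G,M)$. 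The accompanying homotopy is the identity on the boundary and hence controlled, and the model complex built from $fTf^{*}$ visibly has vanishing signature, so the boundary signature vanishes in $K_{n}(\mathfrak{C}(X,G,M))$.

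The hard part will be the control bookkeeping in the bordism step, rather than any new conceptual ingredient. I expect the main obstacle to be verifying that every operator produced by the algebraic diagram chase of Lemma \ref{boundaryduality} remains of finite propagation and $G$-equivariant, so that each identity used in Theorem \ref{algebraicbordisminvariance} is genuinely an identity of morphisms of the equivariant categories; this is the equivariant refinement of the bounded-geometry estimates, and it is precisely here that the bounded-isotropy hypothesis does its work, guaranteeing that passage to the boundary and the formation of mapping cones preserve finite propagation.
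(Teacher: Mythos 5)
Your setup coincides with the paper's: both use the preceding lemma (bounded geometry of $X=M/G$ plus bounded isotropy gives bounded geometry of $M$) to conclude that the equivariant $l^{2}$-simplicial complex $(C^{l^{2}}_{*}(M),b,S)$ is an analytically controlled Hilbert--Poincar\'e complex over $(\mathfrak{A}(X,G,M),\mathfrak{C}(X,G,M))$. Where you diverge is in how the invariance statements are then obtained. The paper simply cites Theorems 5.12 and 7.9 of \cite{HigsonRoe1} --- the homotopy and bordism invariance theorems for \emph{analytically controlled} complexes --- which are stated for an arbitrary $C^{*}$-category/ideal pair and therefore apply verbatim to the generalized equivariant categories; likewise, instead of checking the resolvent and bounded-transform conditions of Definition \ref{controlledoperator} by hand, it invokes ``geometric control implies analytic control'' (Theorem 3.14 of \cite{HigsonRoe2}) together with the remark that this persists equivariantly. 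You instead propose to re-derive both controlled theorems: homotopy invariance as a ``controlled analogue'' of the algebraic Theorem 4.3 of \cite{HigsonRoe1}, and bordism invariance by transporting the diagram chase of Lemma \ref{boundaryduality} and Theorem \ref{algebraicbordisminvariance} into the controlled category with propagation bookkeeping. This can be made to work --- every operator produced by the chase is a matrix of restrictions and adjoints of already-controlled operators --- but it is substantially more labor than needed, and at the last step of the transported bordism argument you would still have to invoke the controlled homotopy invariance theorem (i.e.\ Theorem 5.12 of \cite{HigsonRoe1}) anyway, since Theorem \ref{algebraicbordisminvariance} concludes by comparing $(E_{0},\sqrt{-1}\,b_{0},S_{0})$ with $(E_{0},\sqrt{-1}\,b_{0},fTf^{*})$ via homotopy invariance.

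Two points need correction. First, you claim the bounded-isotropy hypothesis ``does its work'' in guaranteeing that passage to the boundary and formation of mapping cones preserve finite propagation; this is a misattribution. Those operations preserve control automatically. Bounded isotropy is used exactly once, in the lemma giving bounded geometry of $M$, which is what makes $b$ bounded and the complex geometrically (hence analytically) controlled in the first place. Second, for bordism invariance the paper explicitly assumes that the bordism is triangulated compatibly, so that the simplices of its boundary coincide with the given triangulation of $M$; without this assumption the controlled Hilbert--Poincar\'e complex ``with boundary'' from which your transported argument starts does not exist, so you should state it. Finally, a small gap in your control verification: membership of $(B\pm iI)^{-1}$ in $\mathfrak{C}(X,G,M)$ requires local compactness in addition to finite propagation and equivariance; this is exactly what bounded geometry (finitely many simplices per bounded region) supplies, and is the content of the geometric-to-analytic control theorem the paper cites.
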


\proof As $M$ is of bounded geometry, its simplicial chain and cochain complexes are geometrically controlled. The action either permutes or fixes simplices and is therefore unitary, and the fundamental cycle of such a triangulation is invariant. By theorem 3.14 in \cite[p.309]{HigsonRoe2} geometric control implies analytic control. The comment before section 3.2 on \cite[p.310]{HigsonRoe2} ensure that this is true also in the equivariant setting. 
This means that the $l^2$-chain complex $C^{l^2}_*(M)$ of $M$ is an example of an 
analytically controlled Hilbert-Poincare complex. 

In the case of bordism invariance, one shall assume that one has a triangulated bordism such that the simplices in the boundary coincide with the given triangulation of $M$. 

The result now follows as a corollary of theorems 5.12 and 7.9 of \cite{HigsonRoe1}. $\hfill\square$


\section{Bordism invariance of  the  coarse  index}\label{sectionbordism}
In  this  section, we  review  the  approach  to bordism  invariance  of  the  coarse  index  due to C. Wulff \cite{wulffdiplomarbeit} and  extend it  to  the  context  of  manifolds  with  proper actions  of  a  discrete  group.  This  section's  results  benefited in  a  fundamental  way  from  remarks  of  an anonymous  referee. The  authors  thank  her  or  him. 

We  recall  that   given  a  smooth manifold  with  a  proper, smooth  $G$-action $M$,  the  existence  of $G$- invariant  Riemannian  metrics due  to  Palais \cite{palais} implies  the  existence  of  a $G$-invariant  geodesic  length  metric on $M$. Recall  that  this  geodesic  metric  is proper  in  the  sense  of  metric  geometry, meaning  that  closed balls  are  compact.  If  the   original  manifold  is  geodesically  complete, then  so  is  the  one  with  the  $G$-invariant metric.  In the  case  of  a   non co-compact  manifold  $M$,  there  might   be  many  quasi-isometry  classes  of $G$-invariant metrics  on $M$.  We  will fix, however,  a boundedly  controlled coarse structure  coming  from a  particular $G$-invariant, complete   geodesic  metric structure   for  the  remain  of  this  section.  

In  order  to  define  adequately   the  (coarse index)  boundary maps  and  the  functoriality  properties  after  $K$-theory,  certain  remarks  on  the bounded coarse structure on a proper  geodesic manifold are  pertinent.   References for  the  bounded coarse  bounded structure, and  other  ones  defined  on a geodesic  metric  space include \cite{HigsonRoeAnalytic}, chapter 6,  although we  specialize  here  to  the Riemannian  manifold  case.

\begin{definition}[Coarse map  in  the  bounded metric  structure]
Let $M$ and $N$ be  proper Riemannian $G$-manifolds equipped  with   $G$-invariant  geodesic length metrics $d_M$ and $d_N$.      
A map  $f:N\to M$ is  a  coarse  map  if
\begin{itemize}
\item The  inverse image  of  every closed ball  is  compact. 
\item For  every $R>0$, there  exists $\delta>0$, such that $d_N(x, x^{'}) < R$ implies $d_M(f(x), f(x^{'}))<\delta $. 
\end{itemize}
\end{definition}

A coarse  map  induces a $C^{*}$-homomorphism  of  the alebras  of  locally  compact  and  finite propagation operators  by  lemma 6.3.12 in \cite{HigsonRoeAnalytic}.

\begin{definition}[Referenced Manifolds ]
 Let $M$,   $N_1$ and  $N_2$ be  proper, oriented  $G$-manifolds of  dimension  $n$ furnished  with  an orientation preserving $G$-action. 
 
  Assume  that  $M$, $N_1$ and $N_2$ are  furnished  with  the bounded   coarse  structure associated  to a $G$-invariant  geodesic length metric. The manifolds $N_1$ and $N_2$ are referenced  manifolds  with  respect  to $M$  if  they  are  furnished  with  a $G$-equivariant  coarse  map $i_1:N_1 \to  M$  and $i_2:N_1\to M$
 \end{definition}
 
 \begin{definition}[Referenced Bordism]
 
  Let $N_1 $ and  $N_2$ be   referenced  manifolds  with  respect  to  $M$. 
 
 A  referenced  bordism  from  $(N_1, f_1) $ to  $(N_2, f_2)$  is  a  referenced $G$-manifold $W$, together  with a  coarse  map $F: W\to M$,  such that  there exists  a  positive  real number $K$  with  the  property that  the  diagram  depicting  the  inclusions  of  the  boundary  components $j_i: N_i\to \partial W$,  
 
$$ \xymatrix{N_1 \ar[r]^-{j_1} \ar[ddr]_-{f_1}   & \partial W \ar[d]^{j}  &\ar[l]_-{j_2}  \ar[ldd]^-{f_2} N_2 \\ & W \ar[d]^F & \\ & M&   } $$
commutes  up  to  $K$,  meaning  that   the  inequalities
$$ d_M( f_i(n), F\circ j\circ j_i(n))<  K $$
hold  for  $i=1, 2$  and every $n\in N_i$. 
 
\end{definition}

\begin{definition}\label{c-bordism}[Analytical referenced  bordism]
Let  $M$  be  a   complete, proper   metric  $G$-space  with  an  action  of  bounded  isotropy. 
The  analytical  referenced  bordism  group  $\Omega_n ^{\rm an, eq}(M)$  is  the  group with generators 
$(N, f, E, b) $, such  that 
\begin{itemize}
\item  $N$  is  an $n$-dimensional  referenced manifold  with  respect  to $M$,  with   bounded isotropy, 

\item $f: N\to M$ is   an  equivariant coarse  map;
\item  $E$ is  a  $G$-$X$-Hilbert module  with presentation $N$, i.e. an equivariant $N$-module with $X=N/G$,  

\item   $b: E\to E$  is  boundedly   controlled  operator.   
\end{itemize}

Two  of  such  generators $(N_1, f_1,  E_1 , b_1)$ and  $(N_2,f_2,  E_2,  b_2)$
are   said  to  be   referenced-bordant with  respect  to $M$  if  there  exists  a     referenced  bordism $(W, F, E, B)$  with  respect  to $M$,    between   $N_1$   and  $N_2$,  together  with  a coarse map $F: W \to  M$,   inclusions  $j_i:N_i\to  W$,   which  induce  isometries of Hilbert  spaces  $E_i\to  E$,   and  a controlled  operator $B$,  restricting  to  $b_i$. 

\end{definition}

If the space $M$ is a proper  oriented  manifold  of  bounded  isotropy, then
one defines the  fundamental  class  in the group $\Omega_n ^{\rm an, eq}(M)$
by taking $f=\id$ and, for example, $E= \Omega^*_{L^2}(M)$, the $L^2$-completion of the de Rham complex of $M$ and $b$ as the signature operator. Although  this  is  an  unbounded  operator,  the   generalized  conditions   of  analytical  control meet (meaning  that the  Cayley  transform  is  locally  compact and  of  finite  propagation  and  the  resolvent has  finite  propagation).

One can also, take 
$E'=C^{l^2}_*(M)\oplus C^{l^2}_*(M)$ and $b=B_S$ as in (\ref{AsmishSignature}), where 
$S$ is the Poincar\'e duality homomorphism completion. Both choices coincide in terms of 
index by theorems 5.5 and 5.12 in \cite{HigsonRoe2}, using the version of analytic control 
defined in here.

\begin{definition}\label{coarse_fundamental} [Coarse Fundamental Signature  Class]
Let $(N, f, E, b )$  be  a  referenced  manifold  with  respect  to $M$.  The  coarse  fundamental  class   of  $b$ is  the  class   in  $K_{n-1}(\mathfrak{A}(X, G, M)/ \mathfrak{C}(X, G, M))$ of  the boundedly  controlled  operator  $b$ associated  to  the Hilbert-Poincar\'e   complex $E$. 

\end{definition}

We  interpret  now  the  main  result  of \cite{wulffdiplomarbeit} in  an  equivariant  setting: 

\begin{theorem}
The  coarse  fundamental  class is a referenced bordism  invariant. 
\end{theorem}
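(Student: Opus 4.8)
The plan is to follow Wulff's strategy from \cite{wulffdiplomarbeit}, transported to the equivariant controlled categories $\mathfrak{A}(X,G,M)$ and $\mathfrak{C}(X,G,M)$. First I would reduce the statement to a vanishing assertion. Since the disjoint union of referenced manifolds corresponds to the orthogonal direct sum of the associated $G$-$X$-modules and to the block-diagonal sum of the controlled operators, the coarse fundamental class of Definition \ref{coarse_fundamental} is additive, and reversing the orientation of $N$ negates the duality operator and hence inverts the class. Thus, writing $\partial W = N_1 \sqcup (-N_2)$ for a referenced bordism $(W,F,E,B)$ in the sense of Definition \ref{c-bordism}, it suffices to prove that whenever $(N,f,E,b)$ is the referenced boundary of such a $W$, its coarse fundamental class in $K_{n-1}(\mathfrak{A}(X,G,M)/\mathfrak{C}(X,G,M))$ vanishes.

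Next I would pass to a complete model of the bordism. Using the collar neighbourhood of $\partial W$, on which $W$ is isometric to $N\times[0,\varepsilon)$, I would attach a half-infinite cylinder and form $\hat W = W \cup_{N}(N\times[0,\infty))$, extending the coarse map $F$ to $\hat F\colon \hat W\to M$ by the reference $f$ on the product factor, the $G$-action by the product action fixing the $[0,\infty)$ coordinate, and the controlled operator $B$ to an operator $\hat B$ that is a product on the cylindrical end. Here I must check that the extended data still lives in the equivariant controlled categories: properness and bounded isotropy are inherited because the cylinder carries the product action, and by the lemmas preceding Theorem \ref{HomBordInvariance} this yields bounded geometry; finite propagation and local compactness of $\hat B$ over $M$ follow from those of $B$ and $b$ together with the fact, recorded before Definition \ref{c-bordism}, that a coarse map induces a $*$-homomorphism of the finite-propagation, locally compact algebras.

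The heart of the argument, and the step I expect to be the main obstacle, is the $K$-theoretic identification of $[b]$ as a connecting image. I would decompose $\hat W$ as the union of $W$ and the cylindrical end $N\times[0,\infty)$, intersecting in a collar coarsely equivalent to $N$, and use the resulting Mayer--Vietoris sequence for the pair of algebras $(\mathfrak{A},\mathfrak{C})$ pushed forward to $M$ along $\hat F$. The class of $b$ on the boundary slice is, up to the suspension isomorphism on the cylinder, the connecting-homomorphism image of the class determined by $\hat B$ on all of $\hat W$. Because the Poincar\'e--Lefschetz duality of the pair $(W,\partial W)$, the analytic counterpart of Lemma \ref{boundaryduality}, makes the duality structure of $\hat B$ nondegenerate over the interior of $\hat W$, the operator $\hat B$ defines a class in the $K$-theory of the algebra on the filled bordism, on which the relevant connecting map is zero; hence $[b]=0$.

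The delicate points here are the construction of $\hat B$ with genuinely product structure near infinity so that the suspension identification applies, the verification that every algebra map in the Mayer--Vietoris sequence is $G$-equivariantly controlled, and the careful bookkeeping of the degree shift $n\mapsto n-1$ induced by the cylinder. This is precisely the place where Wulff's non-equivariant computation in \cite{wulffdiplomarbeit} must be re-examined in the categories $\mathfrak{A}(X,G,M)$ and $\mathfrak{C}(X,G,M)$, invoking the identifications of \cite{HigsonRoe2} and \cite{HigsonRoeAnalytic} in their equivariant versions established earlier in this section.
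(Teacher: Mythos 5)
Your overall route---complete the bordism by a half-infinite cylinder and run a Mayer--Vietoris argument---is genuinely different from the paper's. The paper follows Wulff directly and never leaves the bordism: it writes the long exact sequence of the pair $(W,\partial W)$,
\begin{equation*}
K_{p+1}\bigl(\mathfrak{A}(W/\partial W)/\mathfrak{C}(W/\partial W)\bigr)
\;\overset{\partial}{\longrightarrow}\;
K_{p}\bigl(\mathfrak{A}(\partial W)/\mathfrak{C}(\partial W)\bigr)
\;\longrightarrow\;
K_{p}\bigl(\mathfrak{A}(W)/\mathfrak{C}(W)\bigr),
\end{equation*}
uses that the class of the boundary operator is a connecting image (``boundary of Dirac is Dirac''), so that exactness kills it in the group over $W$, and then invokes naturality of the coarse assembly maps $A_{\partial W}$, $A_W$ and functoriality along $F$ to conclude $F_{*}(A_W(i_1([b_1])))=F_{*}(A_W(i_2([b_2])))$. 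Your reduction to the vanishing statement (additivity under disjoint union, sign change under orientation reversal) is fine and compatible with either route, and you correctly identified that the real input is the identification of the boundary class as a connecting-homomorphism image.

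However, the final step of your argument, as written, is broken. You assert both (a) that $[b]$ equals the Mayer--Vietoris connecting image $\partial_{MV}[\hat B]$, and (b) that ``the relevant connecting map is zero'' on the class of $\hat B$, justified by nondegeneracy of the duality over the interior of $\hat W$. Claim (b) has no justification and, combined with (a), is simply a restatement of what is to be proven: on a partitioned manifold the connecting map applied to the class of such an operator is typically \emph{nonzero}---that is exactly the content of the partitioned-manifold index theorem---and Poincar\'e--Lefschetz duality on the interior is not a reason for it to vanish. The correct way to finish from (a) alone is exactness of the Mayer--Vietoris sequence over $\hat W$: since $[b]$ lies in the image of $\partial_{MV}$, it lies in the kernel of the next map into $K_{n-1}(\mathfrak{A}(W)/\mathfrak{C}(W))\oplus K_{n-1}$ of the cylinder algebras; in particular its image in the $K$-theory over $W$ vanishes, and then pushing forward along $F$ (using that $F\circ j\circ j_i$ and $f_i$ are at bounded distance, hence induce the same map in $K$-theory) kills the class over $M$. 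Two related points of bookkeeping: the fundamental class of $N$ in the $K$-theory of the algebras over $N$ itself is in general \emph{not} zero even when $N$ bounds (the circle bounding the disk already shows this), so ``hence $[b]=0$'' is only meaningful for the image over $W$, and then over $M$; and your phrase ``Mayer--Vietoris sequence \ldots pushed forward to $M$ along $\hat F$'' cannot be taken literally, since $M$ carries no decomposition inducing such a sequence---the sequence lives over $\hat W$, and only the resulting vanishing statement is pushed to $M$ afterwards.
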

\begin{proof}
The  situation  is  completely  analogous  to  \cite{wulffdiplomarbeit},  where  the invariance  is  seen  to  be  a consequence  of  the  naturality of  the   assembly  map. 
Consider  the   diagram  of $G$-equivariant    inclusions, which  are  assumed  to  give   coarse  maps. 
$$ \xymatrix{N_1 \ar[r] &   \partial W \ar[d]   & \ar [l] N_2 \\   &  W \ar[d]^-{F} &  \\ & M & }$$ 

The  long  exact  sequence  in  $K$-theory  of  $C^*$-algebras  gives: 

$$\xymatrix{K_{p+1}  (\mathfrak{A}(W\diagup \partial  W)/ \mathfrak{C}(W\diagup \partial W)) \ar[r]^-{\partial} &   K_{p}  (\mathfrak{A}(\partial  W)/ \mathfrak{C}( \partial W )     \ar[d]^-{A_{\partial W}}  \ar[r] &  K_{p}  (\mathfrak{A}(W )/ \mathfrak{C}(W ) \ar[d]^-{A_{ W}}  \\   & K_p( C^*(\partial  W)   )\ar[r]_{i_{*}} & K_p(C^*(W))  }      $$ 
 Where  the  upper   morphism  $\partial  $  is  the  connecting  homomorphism,  and  the    vertical  morphisms  are coarse  assembly  maps. 
 
 The  functoriality   of  the  index  morphism,  assembly  map gives 
 $$F_{*}(A_W(i_1([b_1])))=  F_{*}(A_W(i_2([b_2]))). $$

\end{proof}

\begin{corollary}\label{lemmabordismreferences}
The coarse  fundamental  class  gives  a  group  homomorphism 
$$C:\Omega_n^{an, eq}(M)\longrightarrow  K_{n-1}(\mathfrak{A}(X, G, M)/ \mathfrak{C}(X, G, M) $$
\end{corollary}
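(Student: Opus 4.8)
The plan is to read the corollary as a formal consequence of the preceding theorem, so that the only substantively new content is the additivity of the assignment under disjoint union. First I would fix the map on generators by declaring $C(N, f, E, b)$ to be the coarse fundamental class of $b$ in $K_{n-1}(\mathfrak{A}(X, G, M)/\mathfrak{C}(X, G, M))$ as in Definition \ref{coarse_fundamental}, where the boundedly controlled operator $b$ on the $N$-module $E$ is transported into the fixed target category by the $C^*$-homomorphism induced by the coarse map $f: N \to M$ (lemma 6.3.12 of \cite{HigsonRoeAnalytic}). The target is fixed throughout because the quotient $X = M/G$ depends only on $M$ and not on the chosen generator, so all the pushed-forward operators land in one and the same quotient category.

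Next I would verify that $C$ is well defined on $\Omega_n^{an, eq}(M)$, that is, that it respects the referenced bordism relation of Definition \ref{c-bordism}. This is precisely the content of the preceding theorem: if $(N_1, f_1, E_1, b_1)$ and $(N_2, f_2, E_2, b_2)$ are referenced-bordant through some $(W, F, E, B)$, then, because the diagram of inclusions commutes up to bounded distance, the maps $f_{i}$ and $F \circ j \circ j_i$ induce the same $C^*$-homomorphism after $K$-theory; the naturality of the coarse assembly map then forces $F_*(A_W(i_1([b_1]))) = F_*(A_W(i_2([b_2])))$, so the two coarse fundamental classes coincide. Hence $C$ descends from the generators to the quotient by referenced bordism.

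The homomorphism property I would obtain from additivity under disjoint union, which is the group operation on $\Omega_n^{an, eq}(M)$. The sum of two classes is represented by $(N_1 \sqcup N_2, \, f_1 \sqcup f_2, \, E_1 \oplus E_2, \, b_1 \oplus b_2)$; since both $f_i$ reference the same $M$, the two pushed-forward operators live in the same category and the class of the block-diagonal operator $b_1 \oplus b_2$ is $[b_1] + [b_2]$ by additivity of $K$-theory on orthogonal direct sums. The empty generator maps to $0$. That $C$ is a homomorphism of groups rather than merely of monoids is then automatic: in the bordism group the inverse of a class is realized by reversing orientation, and the cylinder $N \times [0,1]$ is a referenced bordism from $N \sqcup (-N)$ to the empty manifold, so the bordism invariance just established yields $C(N, f, E, b) + C(-N, f, E, b) = 0$.

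The step I expect to be the main obstacle is the additivity verification, specifically confirming that forming $N_1 \sqcup N_2$ preserves all the structural hypotheses entering Definition \ref{c-bordism} — bounded isotropy, the two defining properties of a coarse map $f_1 \sqcup f_2: N_1 \sqcup N_2 \to M$, and the equivariant $G$-$X$-module structure on $E_1 \oplus E_2$ — so that the direct sum genuinely represents a generator in the same group, and that transporting $b_1 \oplus b_2$ along $f_1 \sqcup f_2$ into $\mathfrak{A}(X, G, M)/\mathfrak{C}(X, G, M)$ commutes with forming the orthogonal sum. Once the block-diagonal operator is seen to lie in the fixed quotient category and to split compatibly with the induced $C^*$-homomorphisms, the $K$-theoretic additivity, and hence the conclusion, is formal.
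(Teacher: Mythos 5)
Your proposal is correct and takes essentially the same route as the paper: the paper gives no separate argument for this corollary, treating it as an immediate consequence of the preceding bordism-invariance theorem (well-definedness on referenced-bordism classes), exactly as you do. Your additional verifications --- transporting $b$ along $f$ via the $C^*$-homomorphism of lemma 6.3.12 of \cite{HigsonRoeAnalytic}, additivity under disjoint union, and inverses via orientation reversal and cylinders --- simply make explicit what the paper leaves implicit in calling $C$ a group homomorphism.
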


\begin{definition}[Analytical  signature]

The  analytical  signature   of a  referenced  manifold  $(M, f, E, b )$ is  given  as  the  composition  of  the  coarse fundamental  class   morphism $C$ together  with  the  coarse  assembly  map.
(Recall that  the coarse assembly map for $X$ is the homomorphism
$$\mu: K_i^G(M) \cong K_{i+1}(D^*_G(M)/C^*_G(M)) \rightarrow K_i (C^*_G(M)),$$
where the first  instance  of  $K$  denotes equivariant $k$-homology  for  spaces,  and  all  the  others  $C^*$-algebra $K$-theory,   the  first isomorphism is given by Paschke duality, as written in \cite[p.242]{roeComparing}, and the second is the boundary map in the long exact sequence of $K$-groups associated to the ideal $C^*_G(M)$ in $D^*_G(M)$.)

\end{definition}

In the following we shorten the notation $\mathfrak{A}(X, G, M)$, $\mathfrak{C}(X, G, M)$ 
by $\mathfrak{A}(M)$, $\mathfrak{C}(M)$ respectively. 

Recall    that  a  directed  bordism,  in the  sense  of \ref{c-bordism}, produces an  algebraic  Hilbert-Poincar\'e  complex  with  boundary,  as  in the  sense  of  \ref{boundaryduality}. Hence, the  algebraic  signature  constructed  in \ref{AsmishSignature} is  well  defined  after  passing  to  \emph{geometric} bordism. 

\begin{definition}\label{SIGMA}
The  algebraic signature $\Sigma $   is  the  group  homomorphism
 
$$ \Omega_n^{\rm an, eq} (M)\longrightarrow K_n(C_r^*(G))$$
described  in \ref{AsmishSignature}.
The  fact  that  the   algebraic  signature  descends  to  the  referenced  bordism  groups  follow  from  the  fact that  a  referenced  bordism   gives  an  algebraic Hilbert-Poincar\'e complex with  boundary,  \ref{algebraicbordisminvariance} proves  that the algebraic  signature   is  the  same, and  thus  the   map  is  well defined  on  referenced  bordism  classes.

\end{definition}

\section{Mapping surgery  to  analysis }
In this  section,   we  will  state the  main  theorem  of  this  paper: 

\begin{theorem}\label{maintheorem}
Let  $M$  be  a  proper $G$  manifold   with a  bounded  isotropy  action. Assume  that  the quotient  
$M/G$ is  compact. 
Then,  we have the following homomorphism

$$\xymatrix{\Omega_n^{\rm an, eq} (M)   \ar[d]_{\rm C}  \ar[rrrd]^{\rm \Sigma } &  & &  &\\   K^{n-1}(\mathfrak{A}(M)/ \mathfrak{C}(M))  \ar[r]_-{\omega_1}   &  KK_G^{n}(C_0(M), \mathbb{C}  ) \ar[r]_-{\omega_2}  &  KK_G^n ( C_0(\eub{G}), \mathbb{C}) \ar[r]_-{\mu } & K_{n}(C^{r}_{*}(G))   & } ,$$
where   the  maps are  definded  as  follows: the  map $C$ is  the  coarse  fundamental  class, \ref{coarse_fundamental},  the  map   $\omega_1$  is  the isomorphism  constructed  in \cite{roeComparing}(denoted  by $\omega_4$  in  page  242),  the group   homomorphism $\omega_2$ is  induced  by  the  up  to  $G$-equivariant homotopy  unique  map  $M\to \eub{G}$,  and  $\mu $  denotes  the   analytical  Baum-Connes assembly  map   in  $KK$-theory. 
We  will  call  the  composition 
$$  \mu \circ w_2\circ w_1\circ C: \Omega_n^{\rm an, eq} (M)  \longrightarrow  K_{n}(C^{r}_{*}(G))   $$  
the  analytical   signature. 

\end{theorem}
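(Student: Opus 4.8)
The plan is to show first that every arrow in the diagram is a well-defined group homomorphism, and then to establish commutativity of the left-hand triangle, that is, that $\Sigma = \mu \circ \omega_2 \circ \omega_1 \circ C$; the bottom composition is the analytical signature of the statement. Since all five maps are homomorphisms, it is enough to compare the two routes on a generator $(N, f, E, b)$ of $\Omega_n^{\rm an, eq}(M)$, the motivating case being the fundamental class of $M$ itself, where $E = C^{l^2}_*(M)$ carries the Poincar\'e-duality data $(b, S)$ constructed earlier.

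For well-definedness I would quote the earlier results: $C$ is the coarse fundamental class homomorphism of Corollary \ref{lemmabordismreferences}; $\omega_1$ is the Paschke-duality isomorphism of \cite{roeComparing}; $\omega_2$ is functoriality of equivariant $KK$-theory along the classifying map $M \to \eub{G}$, which exists and is unique up to equivariant homotopy because the action is proper with compact quotient and bounded isotropy; $\mu$ is the Baum-Connes assembly map; and $\Sigma$ is well-defined by Definition \ref{SIGMA}, its descent to referenced bordism being precisely the content of Theorem \ref{algebraicbordisminvariance}.

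The substance of the proof is to recognize that both routes compute one and the same object, the equivariant coarse index of the signature operator. On the algebraic side, Proposition \ref{coincidence} together with the identification of $(C^{l^2}_*(M), b, S)$ as an algebraic Hilbert-Poincar\'e complex over $C^*_r(G)$ shows that $\Sigma$ is the Higson-Roe signature $[P_+]-[P_-]$ of that complex. On the analytic side, $\omega_1 \circ C$ sends the generator, via Paschke duality, to the $K$-homology class of the signature operator on $M$; $\omega_2$ pushes this forward to $\eub{G}$ and $\mu$ assembles it, producing exactly its index in $K_n(C^*_r(G))$. The bridge joining the two descriptions is Higson-Roe's ``Mapping Surgery to Analysis'' (theorems 5.12 and 7.9 of \cite{HigsonRoe1}, already used in Theorem \ref{HomBordInvariance}), which identifies the analytic signature of the analytically controlled complex with this coarse index. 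Finally, since $M/G$ is compact, Lemma \ref{CategoryEquivalence} furnishes a Morita equivalence identifying the $K$-theory of $\mathfrak{C}(M)$ with $K_*(C^*_r(G))$, and under this identification the controlled analytic signature corresponds to the algebraic one; chaining the equalities yields $\mu \circ \omega_2 \circ \omega_1 \circ C = \Sigma$.

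The step I expect to be the main obstacle is verifying that the Morita equivalence of Lemma \ref{CategoryEquivalence} is compatible with the entire bottom row, so that the single class coming from the signature operator is carried to the same element of $K_n(C^*_r(G))$ along both paths. Concretely, one must check naturality of the identification $K_n(\mathfrak{C}(M)) \cong K_n(C^*_r(G))$ with respect to Paschke duality, the pushforward to $\eub{G}$, and assembly, and keep careful track of the degree shift built into $\omega_1$ (the $K^{n-1}$ of the quotient category versus the $K_n$ of its ideal). Confirming that the equivariant coarse index genuinely factors through assembly from $\eub{G}$ under the bounded-isotropy hypothesis is the remaining delicate point.
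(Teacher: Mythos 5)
Your proposal has the right skeleton---well-definedness of the five arrows, then commutativity of the triangle checked on generators---and your guiding intuition that both routes compute the index of the signature operator is correct. But the step you yourself defer as ``the main obstacle'' is not a routine naturality check that can be left for later: it \emph{is} the theorem, and your proposal neither proves it nor cites a result that does. The paper closes exactly this gap by invoking the commutativity of the diagram on p.~242 of \cite{roeComparing}: Roe's comparison theorem is precisely the statement that the coarse-geometric route (enter through Paschke duality $\omega_1$, then pass to the index) agrees with the Mishchenko-style assembly given by the descent homomorphism followed by Kasparov product with the Mishchenko--Fomenko element, which is what computes $\Sigma$. Reducing the statement to an unproven ``compatibility of the identifications'' that is equivalent to the statement itself is circular, not a proof; the missing citation of \cite{roeComparing} is the single indispensable ingredient.

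A second ingredient you are missing is how the factorization through $\eub{G}$ is handled. You flag as delicate whether the coarse index ``genuinely factors through assembly from $\eub{G}$''; the paper sidesteps this rather than verifying it. It first decomposes $\mu$ as the descent homomorphism followed by the Kasparov product with the Mishchenko--Fomenko line bundle for $\eub{G}$, and then uses $KK$-theoretic homotopy invariance to identify $\mu\circ\omega_2$ with the analogous composite (the maps called $w_5$ and $w_6$ in \cite{roeComparing}) defined directly on $KK_G^{n}(C_0(M),\mathbb{C})$; after that the classifying map $M\to\eub{G}$ disappears from the computation and everything is a comparison of assembly maps over $M$, i.e.\ Roe's diagram again. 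Two smaller inaccuracies: your appeal to Lemma \ref{CategoryEquivalence} is off target, since the bottom row never passes through $K_n(\mathfrak{C}(M))$---it passes through the quotient $K^{n-1}(\mathfrak{A}(M)/\mathfrak{C}(M))$, and the passage from the quotient category to the ideal is a boundary (index) map, not a Morita equivalence. And theorems 5.12 and 7.9 of \cite{HigsonRoe1} are homotopy and bordism invariance statements (they are what make the two signatures well defined on $\Omega_n^{\rm an, eq}(M)$, which is how the paper uses them); they do not identify the controlled signature with the coarse index, so they cannot serve as the ``bridge'' your argument needs.
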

\begin{proof}

The  analytical  Assembly  map $\mu: KK_G^n ( C_0(\eub{G}), \mathbb{C}) \to  K_n( C^r_*(G))$ is  given  by  the  composite  of  the 
\emph{descent  homomorphism}   
$$ KK_G^n ( C_0(\eub{G}),\mathbb{C})\to  KK^n(  C_0(\eub{G}) \rtimes_r G,  \mathbb{C}\rtimes_{r} G)$$ followed  by   composing  with  the  map  given  by  the   Kasparov  product  with the  Mishchenko-Fomenko  line  bundle for  $\eub{G}$,  $$KK^n(  C_0(\eub{G}) \rtimes_r G,  \mathbb{C}\rtimes_r G) \to KK^n(\mathbb{C}, C_r^*(G)).$$ 
By  $KK$-theoretical homotopy  invariance,  the  composite  map 
 $$KK_G^{n}(C_0(M), \mathbb{C}  ) \underset{\omega_2}{\longrightarrow}   KK_G^n ( C_0(\eub{G}), \mathbb{C}) \underset{\mu}{\longrightarrow } K_{n}(C^{r}_{*}(G))   $$ 
 agrees  with  the  composite    
 $$KK_G^{n}(C_0(M), \mathbb{C}  )\to KK^{n}(C_0(M)\rtimes_r G , \mathbb{C} \rtimes_r G  ) \to KK^n( \mathbb{C}, C_r^{*}(G)), $$
 which  consists  of  the   descent  homomorphism    followed  by  the Kasparov  product  with  a  Mishchenko-Fomenko element  for   $C_0(M)$ (called  $w_5$ and  $w_6$  in  \cite{roeComparing}, p. 242,  respectively.) 
 
By  \ref{algebraicbordisminvariance}, \ref{HomBordInvariance},  the bordism  relations  are  compatible.  

Finally, by  commutativity  of  the  diagram  in page  242  of  \cite{roeComparing},  the  assembly  maps   commute.  
\end{proof}

\typeout{----------------------------  linluesau.tex  ----------------------------}


\end{document}